\documentclass[11pt]{amsart}
\usepackage[UKenglish]{babel}
\usepackage[utf8]{inputenc} 
\usepackage{amsthm} 
\usepackage{amsfonts} 
\usepackage{mathtools}
\usepackage{enumitem} \setlist[enumerate]{label={\upshape(\arabic*)}}
\usepackage{amssymb}
\usepackage[numbers]{natbib}
\usepackage{url}
\usepackage{mathrsfs}
\usepackage{geometry}
\usepackage{bbm}
\usepackage{tikz-cd}
\usepackage{tikz}
\usepackage{color}
\usepackage{framed}
\usepackage{hyperref}
\usepackage{cleveref}
\usepackage[disable]{todonotes}
\hypersetup{
    colorlinks=true, 
    linkcolor=blue, 
    urlcolor=red, 
    citecolor=[rgb]{0,0.7,0},
    linktoc=all 
}

 \theoremstyle{definition}
 \newtheorem{definition}{Definition}[section]
 \newtheorem{proposition}[definition]{Proposition}
 \newtheorem{theorem}[definition]{Theorem}
 \newtheorem{lemma}[definition]{Lemma}
 \newtheorem{corollary}[definition]{Corollary}
 \newtheorem{rmk}[definition]{Remark}
 \newtheorem{ex}[definition]{Example}


\newcommand{\R}{\mathbb{R}}

\newcommand{\Z}{\mathbb{Z}}

\DeclareMathOperator{\conv}{conv}

\def\1{\mathbf{1}}

\title[Column number bounds on polytopal TU-matrices and unimodular polytopes]{Unimodular polytopes and column number bounds on polytopal totally unimodular matrices via Seymour's decomposition theorem}
\author{Benjamin Nill}

\address{Faculty of Mathematics, Otto-von-Guericke-Universit\"at Magdeburg, Universit\"atsplatz 2, 39106 Magdeburg, Germany. Email: benjamin.nill@ovgu.de}

\begin{document}

\setlength{\parindent}{0pt}
\begin{abstract}
We prove a sharp upper bound on the number of distinct columns of a totally unimodular matrix with column sums $1$ improving upon Heller's classical bound. The proof uses Seymour's decomposition theorem. Such matrices are closely related to unimodular polytopes: lattice polytopes where the vertices of every full-dimensional subsimplex form an affine lattice basis. This is an interesting subclass of 0/1-polytopes and contains for instance edge polytopes of bipartite graphs. Our main result on totally unimodular matrices implies a sharp upper bound on the number of vertices of unimodular polytopes. 
\end{abstract}

\maketitle

\section{The column number bound on polytopal TU-matrices}

Totally unimodular matrices are objects of fundamental importance in optimization and matroid theory \cite{Schrbook}. 

\begin{definition}
    An $m \times n$-matrix $A$ with entries in $\Z$ is called {\em totally unimodular} or {\em TU} if every minor is in $\{-1,0,1\}$.
\end{definition}

Note that particularly every entry of a TU-matrix is in $\{-1,0,-1\}$. Heller \cite{Heller} showed that any TU-matrix with $m$ rows such that all columns are pairwise distinct has at most $m^2+m+1$ columns. This is a sharp bound: take the complete graph on $m$ nodes and replace each edge with two arcs for both orientations. Now, extend the $m \times m(m-1)$-incidence matrix of this directed graph by the $m \times m$-identity matrix, the $m \times m$-identity matrix times $-1$, and the zero column to get a TU-matrix with $m$ rows and $m^2+m+1$ columns.

There has been lots of recent activity to study extensions of Heller's result: to find sharp upper bounds on the number of distinct columns of integer matrices satisfying certain constraints, most prominently, bounded determinants. This is also called the column number problem \cite[Problem~1.1]{Paat}. We refer to \Cref{bounded} for more references on current research.

Our contribution is to solve the column number problem for TU-matrices where all columns lie in a common affine hyperplane not containing the origin -- we call such TU-matrices {\em polytopal}. This is motivated by the study of unimodular polytopes which will be discussed in detail in the next section. 

In order to make the similarity of our column number bound to Heller's bound apparent let us rewrite it in the following equivalent form. Here, $(\Z^{m})^*$ denotes the set of integral functionals on $\Z^m$.

\begin{theorem}[Heller's bound]
Let $M$ be a TU-matrix with $m$ rows and $n$ columns such that all columns are pairwise distinct. If there exists $f \in (\Z^{m})^*$ such that $f$ evaluates to a positive number on every column of $M$ (e.g., all column sums are positive), then $n$ is bounded from above by $\frac{(m+1)m}{2}$.
\end{theorem}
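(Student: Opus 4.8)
The plan is to \emph{reduce the statement to Heller's classical bound of $m^2+m+1$} (\cite{Heller}), which we may assume, rather than reprove it from scratch. The device is a symmetrisation of $M$: I would form the augmented matrix
\[
  M' \coloneqq [\,M \mid -M \mid \mathbf{0}\,],
\]
obtained by appending to $M$ all of its negated columns together with a single zero column. The two things to verify are that $M'$ is again TU and that all of its $2n+1$ columns are pairwise distinct. Granting both, the classical bound applied to $M'$ yields $2n+1 \le m^2+m+1$, hence $2n \le m(m+1)$ and $n \le \tfrac{(m+1)m}{2}$, which is exactly the claimed bound.

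That $M'$ is TU is the routine-but-careful step. Appending the zero column cannot produce a nonzero minor involving it, so it preserves total unimodularity. For the $-M$ block, any square submatrix of $[\,M \mid -M\,]$ becomes, after factoring a sign out of each column drawn from the $-M$ part, a matrix whose columns are all restrictions of columns of $M$; if two of these coincide the determinant vanishes, and otherwise the determinant equals $\pm$ a minor of $M$. In either case it lies in $\{-1,0,1\}$, so $M'$ is TU.

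The role of the functional $f$ is precisely to force distinctness, and this is where I would locate the real content of the statement. The columns of $M$ are distinct by hypothesis, and negation is injective, so the columns within each of the two blocks are distinct. The zero column is distinct from all the others because $f$ is strictly positive on every column of $M$, hence strictly negative on every column of $-M$, whereas $f(\mathbf{0})=0$. Finally, the $M$-block and the $-M$-block share no column: if a column $c$ of $M$ equalled $-c'$ for some column $c'$ of $M$, then $f(c) = -f(c') < 0$, contradicting $f(c)>0$. Thus all $2n+1$ columns are distinct and the reduction goes through; the parenthetical ``all column sums are positive'' is just the special case $f = \mathbf{1}$.

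I expect no serious obstacle here, since the bound is equivalent to one we are free to assume: the only points demanding genuine care are the sign bookkeeping in the TU-verification and using the positivity of $f$ in exactly the two places needed to rule out a zero column and to separate the $M$- and $-M$-blocks. For this restated classical bound the symmetrisation argument is by far the most economical route, in contrast to the paper's new polytopal bound, which genuinely requires Seymour's decomposition theorem.
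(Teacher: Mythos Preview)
Your argument is correct and is exactly the route the paper takes: it reduces to Heller's classical bound by forming $[M \mid -M \mid \mathbf{0}]$, noting this is TU with $2n+1$ distinct columns (the functional $f$ being used precisely to separate the $M$- and $-M$-blocks and to exclude the zero column), and applying $2n+1 \le m^2+m+1$. You have simply filled in the details of the one-line equivalence the paper states immediately after the theorem.
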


To see the equivalence to the original formulation of Heller, for the one direction consider the extended matrix consisting of $M$, $-M$, and the zero column, while for the reverse direction note that one can always separate a finite number of non-zero vectors by an integral functional into two halfspaces.

\smallskip

Here is the definition of polytopal integer matrices.

\begin{definition}
An $m \times n$-matrix integer matrix $M$ is called {\em polytopal} if there exists $f \in (\Z^{m})^*$ such that $f$ evaluates to $1$ on every column of $M$.
\end{definition}

Note that for TU-matrices this is equivalent\footnote{For the `if'-direction let us assume that the first $r$ columns $v_1, \ldots, v_r$ of $M$ are linearly independent and span the column span of $M$. By our assumption, all other columns of $M$ are contained in the affine hull of $v_1, \ldots, v_r$. By the TU-property we can extend $v_1, \ldots, v_r$ to a lattice basis $B$ of $\Z^m$. Clearly, one can choose $f \in (\Z^{m})^*$ evaluating to $1$ on $B$, and thus on all columns of $M$.} to all columns lying in a common affine hyperplane that does not contain the origin. An important special case is that all column sums are $1$. Here is our main result.

\begin{theorem}\footnote{After the publication of this article Jon Lee pointed us to Corollary~4.3 in his paper \cite{JonLee}, which directly implies Theorem~\ref{theorem:tu}.}
    Let $M$ be a TU-matrix with $m$ rows and $n$ columns such that all columns are pairwise distinct. If $M$ is polytopal, then $n$ is bounded from above by $10$ if $m=5$ and by $\lfloor \frac{(m+1)^2}{4} \rfloor$ if $m \not=5$.\label{theorem:tu}
\end{theorem}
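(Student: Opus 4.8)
The plan is to translate everything into the language of regular matroids and to run an induction driven by Seymour's decomposition theorem. Given a polytopal TU-matrix $M$ with distinct columns, the represented matroid is regular, and the polytopal functional $f$, which takes the value $1$ on every column, has two consequences. First, no column is zero and no two columns are equal or are negatives of one another, so the matroid is \emph{simple} (no loops, no parallel elements). Second, reducing modulo $2$ the relation expressing that $f$ evaluates to $1$ on every column shows that the all-ones vector $\1$ lies in the GF$(2)$ cocycle space (row space) of $M$; hence $\1$ is orthogonal to every circuit, so every circuit has even cardinality. I call such a binary matroid \emph{bipartite}. Passing to the column span lets me assume $M$ has full row rank $m$. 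Thus it suffices to prove: every simple bipartite regular matroid of rank $m$ has at most $h(m)$ elements, where $h(m) := \lfloor (m+1)^2/4\rfloor$ for $m\neq 5$ and $h(5):=10$.

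I would prove this by strong induction on $m$. If the matroid is disconnected or has a $2$-separation it is a $1$- or $2$-sum $M = M_1 \oplus_k M_2$; if it is $3$-connected, Seymour's theorem says it is graphic, cographic, isomorphic to $R_{10}$, or a $3$-sum. The decisive and convenient point is that for a $k$-sum the ground set of $M$ is a disjoint union $E(M) = E_1' \sqcup E_2'$ of the two sides with their glue elements removed, and $M|E_i'$ is just a restriction of $M$. A restriction of a simple bipartite matroid is again simple and bipartite, so the induction hypothesis applies to each $M|E_i'$, whose rank is at most $r_i := \rk(M_i)$. Writing $n = |E_1'| + |E_2'|$ I obtain $n \le h(r_1) + h(r_2)$, and the rank identities $r_1 + r_2 = m,\, m+1,\, m+2$ for $1$-, $2$-, $3$-sums reduce the inductive step to the purely numerical superadditivity inequality $h(r_1) + h(r_2) \le h(m)$.

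For the base cases I would invoke the matroid dictionary. A bipartite graphic matroid is the cycle matroid of a bipartite graph, and a simple one of rank $m$ has at most $\lfloor (m+1)^2/4\rfloor$ edges, attained by a balanced complete bipartite graph $K_{a,b}$ with $a+b=m+1$; a bipartite cographic matroid is the cocycle matroid of an Eulerian graph, where simplicity forces minimum degree $\ge 4$ and hence the much smaller linear bound $|E|\le 2m-2$; finally $R_{10}$ has rank $5$, is bipartite, and has $10$ elements, which is exactly what produces the exceptional value $h(5)=10$. Each of these is $\le h(m)$, matching the claimed bound and pinning the $m=5$ exception to $R_{10}$.

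The main obstacle is that the inequality $h(r_1)+h(r_2)\le h(m)$ is tight rather than slack precisely around rank $5$, so the argument survives only if two structural facts are secured. The first is the quadratic superadditivity of $h$ together with the extra unit coming from $h(5)=10$: I must carry the exception through every case of the induction and verify that the handful of equalities (for instance a $3$-sum of two rank-$4$ pieces, or a rank-$5$ piece glued to a rank-$4$ piece) do not overshoot, which is where the floor functions must be handled with care. The second, and genuinely indispensable, fact is that a $3$-sum never has a rank-$3$ part: its parts are $3$-connected with at least seven elements, while a simple regular matroid of rank $3$ has at most $\binom{4}{2}=6$ elements, forcing both ranks to be $\ge 4$. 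Without this the inequality fails outright (two rank-$3$ pieces would give $8>6$), so the crux of the proof is exactly this marriage of Seymour's connectivity bookkeeping with the tight arithmetic near the $R_{10}$ exception.
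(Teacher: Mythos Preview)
Your approach is essentially correct and takes a genuinely different route from the paper's own proof. Both arguments rest on Seymour's decomposition theorem and on the same superadditivity inequalities for $h$, but the bookkeeping is organised quite differently.

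The paper stays entirely in the language of TU-matrices. It applies Seymour's theorem to the reduced matrix $M'$ (where $M=(I_m\,|\,M')$) using the matrix versions of $1$-, $2$-, $3$-sums together with a separate $\Delta$-sum, and it carries the polytopal functional $f$ explicitly through each sum via a dedicated lemma. The graphic base case (network matrices) is handled by showing the underlying graph of the associated digraph is bipartite; the cographic base case is treated by a self-contained combinatorial argument about patterns of paths in a tree, yielding the bound $3m-5$. Small row numbers $m\le 6$ are disposed of by the computer classification of unimodular polytopes.

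Your reformulation in terms of simple bipartite regular matroids is cleaner on several fronts. The key observation---that polytopality forces $\1$ into the GF$(2)$ cocycle space and hence makes every circuit even---yields an invariant that is trivially inherited by restrictions, so you never need to track a functional through the sums. Your cographic bound (Eulerian and $3$-edge-connected imply minimum degree $\ge 4$, hence $|E|\le 2m-2$) is both sharper and much shorter than the paper's $3m-5$ argument. Working with matroid $3$-sums also lets you avoid the $\Delta$-sum case altogether. What each approach buys: the paper's is more self-contained for a reader without matroid background, while yours isolates the structural content more transparently and makes the role of $R_{10}$ in producing $h(5)=10$ completely explicit.

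One point requires more care. Your claim that the parts of a $3$-sum are themselves $3$-connected is true when $M$ is $3$-connected, but it is not part of the bare statement of Seymour's theorem and should be cited. Without it you only get $r_i\ge 3$ (from $|E_i'|\ge 4$ and simplicity), and the inequality $h(r_1)+h(r_2)\le h(r_1+r_2-2)$ genuinely fails at $(r_1,r_2)\in\{(3,3),(3,5),(5,3)\}$. All of these correspond to $m\le 6$, so an equally valid fix is to treat $m\le 6$ as a base case, exactly as the paper does; for $m\ge 7$ the inequality already holds for all $r_1,r_2\ge 3$.
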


We remark that this sharpens in this polytopal situation Heller's bound (roughly) by a factor of $2$. 

\begin{ex} The bound in \Cref{theorem:tu} is sharp for all $m$ as the following examples show. For $m=5$ one may take the TU polytopal $5 \times 10$-matrix
\[\begin{pmatrix}1&0&0&0&0&1&0&0&1&-1\\0&1&0&0&0&-1&1&0&0&1\\0&0&1&0&0&1&-1&1&0&0\\0&0&0&1&0&0&1&-1&1&0\\0&0&0&0&1&0&0&1&-1&1\end{pmatrix}\]
For $m \not=5$ and $m$ odd, take the incidence matrix (with entries in $0$ and $1$) of a complete bipartite graph $K_{(m+1)/2,(m+1)/2}$ with parts $A,B$ and remove a row corresponding to a vertex in $B$. This yields a polytopal $m \times (m+1)^2/4$-matrix as summing for each column over the $(m+1)/2$ entries corresponding to the vertices of $A$ equals $1$.
Similarly, for $m \not=5$ and $m$ even, the bound is attained by the incidence matrix of the complete bipartite graph $K_{m/2,m/2+1}$ with one row removed. Note that $(m/2)(m/2+1) = (m+1)^2/4-\frac{1}{4} = \lfloor \frac{(m+1)^2}{4} \rfloor$.
\end{ex}

As the exceptional case for $m=5$ already hints at, the proof of \Cref{theorem:tu} relies on the celebrated decomposition theorem of Seymour. We note that Seymour's decomposition theorem has also been used in the paper by Bixby and Cunningham \cite{bixby} to get a general result for matroids from which they derived an alternative proof of Heller's bound on TU-matrices. They also described a beautiful direct combinatorial argument of Heller's theorem via Sauer's lemma which is discussed in Schrijver's book \cite[section 21.3]{Schrbook} and has been recently used to get bounds on more general TU-matrices with bounded minors \cite{averkov2024maximal}. However, in our setting it is an open problem whether such a direct argument is possible.\footnote{The elegant arguments in \cite{JonLee} also rely on Seymour's decomposition results.}

Our motivation for the study of polytopal TU-matrices comes from their direct relation to unimodular polytopes, see \Cref{translation} below. In the next section we will discuss this interesting class of lattice polytopes and use \Cref{theorem:tu} to deduce a sharp upper bound on their number of vertices in \Cref{main}. The proof of \Cref{theorem:tu} will occupy Section~\ref{sec:proof}. 

\begin{rmk}            
    We believe\footnote{Also this generalization is implied by Corollary~4.3 in \cite{JonLee}.} that our bound in \Cref{theorem:tu} also holds for TU-matrices such that all columns are pairwise distinct and have positive {\em odd} column sums. In fact, the proof of \Cref{theorem:tu} could be immediately extended to this case, the only missing part is the computationally involved task to verify that it holds in the case of at most $6$ rows (which is true for column sums equal to $1$ by \Cref{fewrows}).
\end{rmk}

\subsection*{Acknowledgment}

We thank Gennadiy Averkov, Kerstin Bennecke, Volker Kaibel and Matthias Schymura for their interest and remarks. We are extremely grateful to the anonymous referees for their very thorough reading and several very useful remarks such as pointing out the known column number bound in the cographical case. The author is funded by the Deutsche Forschungsgemeinschaft (DFG, German Research Foundation) – 539867500 as part of the research priority program Combinatorial Synergies. We thank Jon Lee for letting us know about his paper \cite{JonLee}.

\section{Applying the column number bound to unimodular polytopes}

\label{sec:unimodular}

As is well-known, many interesting families of lattice polytopes can be defined via totally unimodular matrices \cite{Schrbook, Christian}, for instance from incidence matrices of directed graphs. In this section, we will apply \Cref{theorem:tu} to the class of unimodular polytopes. 

\subsection{Definition and relation to polytopal TU-matrices}

Let $P \subset \R^d$ be a {\em lattice polytope}, i.e., $P$ is the convex hull of points in the lattice $\Z^d$. Two lattice polytopes are {\em isomorphic} (we use the symbol $\cong$) if there is an affine-linear automorphism of $\Z^d$ mapping the vertices bijectively to each other.

A lattice {\em simplex} $S$ of dimension $d$ is called {\em unimodular} if its vertices form an affine lattice basis of $\Z^d$. Equivalently, $S \cong \Delta_d := \conv(0,e_1, \ldots, e_d)$. Note that a unimodular simplex contains only its vertices as lattice points. The definition of unimodular simplices can be extended to lattice polytopes (see \cite[p.~297]{Triangulationsbook} and \cite[Section~4.3]{Binomial}).

\begin{definition}
    $P$ is called \emph{unimodular polytope} if every full-dimensional simplex whose vertex set is a subset of the vertices of $P$ is unimodular.
\end{definition}

\begin{ex} Consider an undirected bipartite graph $G$ with parts $A$ and $B$. Then the {\em edge polytope} of $G$ is defined as the convex hull of the vectors $e_i+e_j$ where $i \in A$ and $j \in B$ form an edge of $G$. 
For the complete bipartite graph $K_{a,b}$, we have $P(K_{a,b}) \cong \Delta_{a-1} \times \Delta_{b-1}$. Hence, by \cite[Prop.~6.2.11]{Triangulationsbook} any edge polytope of a bipartite graph is unimodular.\label{ex:ex}
\end{ex}

\begin{rmk}
    Instead of `unimodular polytope' also the term `totally unimodular polytope' is used in the literature, e.g., \cite[Proposition~9.3.20]{Triangulationsbook}. The reader should be cautious that there is also the notion of a {\em unimodular point configuration}, which means that every linearly independent subfamily of full rank has determinant $1$ or $-1$. The convex hull of a unimodular point configuration may not be a unimodular polytope (e.g., for $[-1,1])$), while the vertex set of a unimodular polytope does not have to be a unimodular point configuration (e.g., for $[1,2]$). 
    
    Unimodular point configurations and their convex hulls turn up naturally for interesting classes of lattice polytopes \cite{Binomial, Nef}. The convex hull of columns of the incidence matrix of a directed graph is also known as an {\em arc polytope}, see \cite[Section 3.2.1]{Triangulationsbook} and references therein. Recently, motivated from their connection to (oriented) regular matroids there has been a lot of activity concerning more generally the convex hull of columns of TU-matrices which are studied under the (ambiguous) name of {\em root polytopes} or, in the centrally-symmetric case, {\em generalized symmetric edge polytopes} \cite{RegMat1, RegMat2, RegMat3, RegMat4}. As \Cref{translation} shows, any unimodular polytope is isomorphic to such a root polytope (but not vice versa). 
\end{rmk}

Let us explain the direct connection between unimodular polytopes and polytopal TU-matrices in more detail. We abbreviate $\conv(M)$ for the convex hull of the columns of a matrix $M$.

As it is natural in this context, let us recall that an $m\times n$-matrix of rank $m$ is called {\em unimodular} if every minor of order $m$ is in $\{-1,0,1\}$ (i.e., the columns form a unimodular point configuration). In particular, any TU $m \times n$-matrix of rank $m$ is unimodular.

\begin{proposition}
Let $M$ be a polytopal unimodular $(d+1) \times n$-matrix. Then $\conv(M) \subset \R^{d+1}$ is a unimodular polytope of dimension $d$ with $n$ vertices. 

On the other hand, every unimodular polytope of dimension $d$ is isomorphic to $\conv((I_{d+1} | B))$ where $B$ and $(I_{d+1}|B)$ are polytopal TU-matrices with column sums $1$. 
\label{translation}
\end{proposition}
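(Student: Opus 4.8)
The plan is to prove the two directions separately, relying throughout on the standard identity that for lattice points lying at height $1$ with respect to a primitive functional, the determinant of $d+1$ of them equals, up to sign, the normalized volume of the simplex they span; in particular this determinant lies in $\{-1,0,1\}$ exactly when that simplex is unimodular. Concretely, if $w^{(0)},\ldots,w^{(d)}\in\Z^{d}$ and $\phi$ lifts them to height $1$, then row-reducing the lift to the affine determinant $\det\begin{pmatrix}1&\cdots&1\\ w^{(0)}&\cdots&w^{(d)}\end{pmatrix}$ produces $\pm$ the normalized volume.

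For the first statement, let $v_1,\ldots,v_n$ be the columns of $M$ and let $f$ certify polytopality, so all $v_i$ lie in the affine hyperplane $H=\{x:f(x)=1\}$, which misses the origin. Since $f$ takes the value $1$ on integer points it is primitive, so $\Lambda:=\Z^{d+1}\cap H$ is an affine lattice of rank $d$ and I may fix an affine isomorphism $(H,\Lambda)\cong(\R^{d},\Z^{d})$. As $M$ has rank $d+1$ its columns span $\R^{d+1}$ linearly, forcing their affine hull to be all of $H$, whence $\dim\conv(M)=d$. To see that the (distinct) columns are exactly the vertices, suppose some $v_i$ lay in the convex hull of the others; a minimal Carath\'eodory representation yields an affinely independent index set $J$ with $v_i\in\relint(\conv(v_j:j\in J))$, where $|J|\ge 2$ by distinctness. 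Extending $J$ to a $(d+1)$-element affinely independent set of columns (possible since the columns affinely span $H$), the corresponding $(d+1)\times(d+1)$ minor of the unimodular matrix $M$ is $\pm 1$, so that simplex, and hence its face $\conv(v_j:j\in J)$, is unimodular and contains no point of $\Lambda$ in its relative interior — contradicting $v_i\in\Lambda$. Thus every column is a vertex, giving $n$ vertices, and the volume–determinant identity applied to any affinely independent $(d+1)$-subset of columns shows every full-dimensional sub-simplex is unimodular, so $\conv(M)$ is a unimodular polytope.

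For the converse, let $P\subset\R^{d}$ be a unimodular polytope with vertices $w_1,\ldots,w_n$; since $\dim P=d$ there are $d+1$ affinely independent vertices, which by definition form a unimodular simplex, so after an affine lattice automorphism I may assume these are $0,e_1,\ldots,e_d\cong\Delta_d$. Consider the affine map $\phi:\R^{d}\to\R^{d+1}$, $\phi(x)=(1-\sum_i x_i,\,x_1,\ldots,x_d)$: it sends $0,e_1,\ldots,e_d$ to the standard basis $e_1,\ldots,e_{d+1}$, maps $\Z^{d}$ bijectively onto $\Z^{d+1}\cap\{\1=1\}$, and realizes a lattice-polytope isomorphism $P\cong\conv(N)$, where $N$ is the integer matrix with columns $\phi(w_1),\ldots,\phi(w_n)$. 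By construction its first $d+1$ columns form $I_{d+1}$, so $N=(I_{d+1}\mid B)$, every column has coordinate sum $1$, and $N$ is polytopal with $f=\1$.

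It remains to upgrade $N$ from unimodular to totally unimodular, which is the main obstacle. The volume–determinant identity shows $N$ is unimodular: a $(d+1)$-subset of columns is linearly independent exactly when the underlying vertices are affinely independent, in which case the minor is $\pm$ the normalized volume of a unimodular simplex, hence $\pm 1$, while dependent subsets give $0$. To pass to all minors I invoke the classical equivalence (Schrijver) that a matrix of the form $(I_{d+1}\mid B)$ is unimodular if and only if $B$ is totally unimodular if and only if $(I_{d+1}\mid B)$ itself is totally unimodular: any $k\times k$ submatrix of $B$ on rows $R$ and columns $C$ has, up to sign, the same determinant as the maximal minor of $(I_{d+1}\mid B)$ formed by the columns $C$ together with the unit columns $e_i$ for $i\notin R$, so unimodularity of $N$ bounds every minor of $B$, and hence of $N$, by $1$ in absolute value. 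This yields that $B$ and $(I_{d+1}\mid B)$ are TU, completing the proof; the decisive point is precisely this passage from the maximal-minor condition furnished by the unimodular-polytope hypothesis to the full TU condition, which is why the special form $(I_{d+1}\mid B)$, engineered by $\phi$ around a chosen unimodular reference simplex, is essential.
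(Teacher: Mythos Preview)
Your proof is correct and follows essentially the same route as the paper. Both directions hinge on the same key fact---that $(I_{d+1}\mid B)$ is unimodular if and only if $B$ (equivalently $(I_{d+1}\mid B)$) is TU---which you cite from Schrijver and the paper sketches in one parenthetical line; your normalization via the explicit lift $\phi$ is just a coordinate-free rephrasing of the paper's ``multiply by $R^{-1}$'' step. If anything, you are more careful than the paper in justifying that all $n$ columns are genuinely vertices of $\conv(M)$, a point the paper's proof leaves implicit.
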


\begin{proof} When $M$ is a polytopal unimodular matrix, any $d+1$ linearly independent set of columns of $M$ is a lattice basis of $\Z^{d+1}$, therefore, the vertex set of a $d$-dimensional unimodular simplex. Hence, $\conv(M)$ is a unimodular polytope of dimension $d$.

On the other hand, let $P$ be a unimodular polytope of dimension $d$. After a suitable isomorphism, we may assume that $P \subset \R^d \times \{1\}$ and that the first $d+1$ vertices of $P$ form a lattice basis of $\Z^{d+1}$. Let $M$ be the unimodular matrix with the vertices of $P$ as columns. Multiplying from the left with the inverse $R^{-1}$ of the matrix $R$ of the first $d+1$ columns yields a unimodular matrix $M' =(I_{d+1}|B)$ with $P \cong \conv(M')$. As is well-known, $B$ and $M'$ are TU-matrices (note that the $(d+1)$-minors of $M'$ with $k$ columns in $I_{d+1}$ equal $\pm$ the $(d+1-k)$-minors of $B$). 
As $(0 \cdots 0 \, 1)$ evaluates to $1$ on every column of $M$, $(1 \cdots 1\, 1) = (0 \cdots 0 \,1) R$ evaluates to $1$ on every column of $M'$.
\end{proof}

\begin{rmk}
 Let us note the following observation from the proof of \Cref{translation}: every unimodular polytope can be represented by a unimodular matrix (not necessarily a TU-matrix) with a row vector with all ones.
\end{rmk}

\subsection{The maximal number of lattice points/vertices of unimodular polytopes}

In the sense of the hierarchy of covering properties of lattice polytopes given in \cite[Section~1.2.5]{Christian} and \cite[Proposition~9.3.20]{Triangulationsbook}, unimodular polytopes are at the very top and thus among the most interesting natural classes of lattice polytopes. Hence, they form a perfect testing ground for the main conjectures in lattice polytope theory such as the unimodality questions in Ehrhart theory \cite{braun}. For instance, while it was just recently shown that Ehrhart $h^*$-polynomials of unimodular polytopes do not have to be log-concave \cite{ML}, the major question whether they can be non-unimodal is still wide open. In this paper, we illustrate how the existing strong results on TU-matrices can be helpful in improving our understanding of unimodular polytopes by giving a sharp upper bound on their number of vertices.

Note that the only lattice points in a unimodular polytope are its vertices. In fact, unimodular polytopes are isomorphic to {\em 0/1-polytopes}, i.e., they are isomorphic to lattice subpolytopes of $[0,1]^{\dim(P)}$. 
0/1-polytopes are classified up to dimension $5$ \cite{Aichholzer} and available in the database PolyDB \cite{PolyDB}. Using Polymake \cite{Polymake} and OSCAR \cite{OSCAR,OSCAR-book} we get the following complete classification of unimodular polytopes up to dimension $5$.

\begin{proposition}
    For each dimension $d$ there are the following number of isomorphism classes of unimodular polytopes of dimension $d$:
    
    \begin{center}
    \begin{tabular}{c||c|c|c|c|c}
        dimension $d$&1&2&3&4&5\\\hline
        \text{\# unimodular polytopes}&1&2&4&13&38
    \end{tabular}
    \end{center}
    \label{database}
\end{proposition}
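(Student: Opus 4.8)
The plan is to prove this by a computer-assisted enumeration, building on two ingredients already available: the complete classification of $0/1$-polytopes up to dimension $5$ (from \cite{Aichholzer}, stored in \cite{PolyDB}), and the fact recorded just above that every unimodular polytope is lattice isomorphic to a $0/1$-polytope. Together these tell us that the unimodular polytopes of dimension $d \le 5$ form, up to lattice isomorphism, a subfamily of an explicitly known finite list, so it suffices to first identify which polytopes on that list are unimodular and then count the resulting lattice isomorphism classes.

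First I would translate the defining condition into a form that a computer can test directly. For a $d$-dimensional lattice polytope $P$ with vertices $v_1, \dots, v_n$, let $M$ be the $(d+1) \times n$ matrix whose columns are the homogenizations $(v_i, 1) \in \Z^{d+1}$. A $(d+1)$-subset of vertices spans a full-dimensional simplex exactly when the corresponding columns of $M$ are linearly independent, and that simplex is unimodular exactly when the associated maximal minor is $\pm 1$. Hence $P$ is a unimodular polytope if and only if $M$ is a unimodular matrix, i.e.\ every maximal minor of $M$ lies in $\{-1,0,1\}$, which is consistent with \Cref{translation}. The test thus reduces to iterating over the $\binom{n}{d+1}$ maximal minors of $M$ and checking each lies in $\{-1,0,1\}$; in dimension at most $5$ the number of polytopes and of minors per polytope is finite and modest, so this is feasible in Polymake \cite{Polymake} and OSCAR \cite{OSCAR}.

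Next I would run this filter over every representative in the $0/1$-database for $d = 1, \dots, 5$, retaining precisely the polytopes whose homogenization matrix is unimodular. The remaining and most delicate step concerns the equivalence relation used for counting. The $0/1$-database classifies polytopes only up to the symmetries of the unit cube (coordinate permutations and the reflections $x_i \mapsto 1 - x_i$), which form a proper subgroup of the affine unimodular maps defining lattice isomorphism. Consequently lattice isomorphism is coarser than $0/1$-equivalence, and two $0/1$-inequivalent representatives may still be lattice isomorphic. I would therefore test the surviving representatives pairwise for lattice isomorphism, merge those that coincide, and read off the number of resulting classes.

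The hard part is this final reconciliation of equivalence relations rather than the unimodularity test itself: one must be confident that the lattice isomorphism check, as implemented in OSCAR, is complete, so that no two surviving $0/1$-classes are wrongly separated and none wrongly merged, since an error there would directly corrupt the counts $1, 2, 4, 13, 38$. A secondary point is simply to rely on the correctness and completeness of the underlying $0/1$-polytope database in dimension $5$; granting these, the tabulated numbers follow.
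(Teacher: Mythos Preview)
Your proposal is correct and follows precisely the approach of the paper: the paper states only that the numbers are obtained by filtering the Aichholzer $0/1$-polytope database in PolyDB using Polymake and OSCAR, so your write-up is in fact more detailed than the paper's own justification. Your explicit observation that the $0/1$-equivalence used in the database is strictly finer than lattice isomorphism, and that one must therefore merge surviving representatives, is a genuine point that the paper leaves implicit in the computation.
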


From Example~\ref{ex:ex} one might guess that the maximal number of vertices is always attained by the product of two unimodular simplices, however, this is wrong in dimension $d=4$.

\begin{ex}
Note that the number of vertices of $\Delta_2 \times \Delta_2$ is $9$. However, the convex hull of the columns of the following $4\times 10$-matrix (note that the matrix is not TU) is a $4$-dimensional unimodular 0/1-polytope with $10$ vertices:\vspace{1ex}

\begin{center}
$\begin{pmatrix}
0 & 0 & 0 & 0 & 1 & 1 & 1 & 1 & 1 & 1 \\\\
0 & 1 & 1 & 1 & 0 & 0 & 0 & 1 & 1 & 1 \\\\
1 & 0 & 1 & 1 & 0 & 1 & 1 & 0 & 0 & 1 \\\\
1 & 1 & 0 & 1 & 1 & 0 & 1 & 0 & 1 & 0
\end{pmatrix}$
\end{center}

\label{ex:4}
\vspace{1ex}

Unimodularity can be checked by considering all subsets of the columns of size $5$, subtracting one of the columns from the others, and verifying that the determinant of these four non-zero vectors equals $\pm1$. 
\end{ex}

As an application of \Cref{theorem:tu} (with $m=d+1$), we get a sharp bound on the number of vertices of unimodular polytopes that shows that this exceptional behavior only happens in dimension $4$.

\begin{corollary}
    The number of vertices of a $d$-dimensional unimodular polytope $P$ is bounded from above by $10$ if $d=4$ and by $\lfloor \frac{(d+2)^2}{4} \rfloor$ otherwise. The bound is attained by \Cref{ex:4} if $d=4$, by $\Delta_{\frac{d}{2}} \times \Delta_{\frac{d}{2}}$ if $d\not=4$ even, and by $\Delta_{\frac{d+1}{2}} \times \Delta_{\frac{d-1}{2}}$ if $d$ is odd.
    \label{main}
\end{corollary}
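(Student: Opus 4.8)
The plan is to reduce \Cref{main} directly to \Cref{theorem:tu} via the dictionary established in \Cref{translation}, and then separately to verify that the listed examples are unimodular and attain the claimed bound. Concretely, I would invoke the second assertion of \Cref{translation}: any $d$-dimensional unimodular polytope $P$ is isomorphic to $\conv((I_{d+1}\,|\,B))$, where $M := (I_{d+1}\,|\,B)$ is a polytopal TU-matrix with $m = d+1$ rows. Because an isomorphism of lattice polytopes carries the vertex set of $P$ bijectively onto the vertex set of $\conv(M)$, and by construction the columns of $M$ are exactly these vertices, the number of vertices of $P$ equals the number $n$ of columns of $M$; in particular these columns are pairwise distinct. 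Now apply \Cref{theorem:tu} to $M$ with $m = d+1$: the exceptional row count $m=5$ corresponds to $d=4$ and gives $n \le 10$, while for $m \neq 5$, i.e.\ $d \neq 4$, we obtain $n \le \lfloor (m+1)^2/4\rfloor = \lfloor (d+2)^2/4\rfloor$. This yields the upper bound in all cases.

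It then remains to check sharpness. For $d=4$ the polytope of \Cref{ex:4} is a $4$-dimensional unimodular polytope with $10$ vertices, matching the exceptional bound. For $d \neq 4$ I would use products of unimodular simplices, which are unimodular by \Cref{ex:ex} (a product of standard simplices is the edge polytope of a complete bipartite graph). If $d$ is even, take $\Delta_{d/2} \times \Delta_{d/2}$, which has $(d/2+1)^2 = (d+2)^2/4$ vertices; since $d+2$ is even, this equals $\lfloor (d+2)^2/4\rfloor$. If $d$ is odd, take $\Delta_{(d+1)/2} \times \Delta_{(d-1)/2}$, whose number of vertices is $\tfrac{d+3}{2}\cdot\tfrac{d+1}{2} = \tfrac{(d+1)(d+3)}{4}$; a one-line parity check (using that $(d+2)^2 \equiv 1 \pmod 4$ for odd $d$) shows this equals $\lfloor (d+2)^2/4\rfloor$. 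Thus the bound is attained in every dimension.

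Since essentially all of the genuine difficulty is already packaged into \Cref{theorem:tu}, I do not expect any substantive obstacle here; the corollary is a translation plus a short sharpness computation. The only points requiring care are routine: first, the index shift $m = d+1$, so that the row-count quantities of \Cref{theorem:tu} become dimension quantities and the exceptional case $m=5$ becomes $d=4$; and second, confirming that $\conv(M)$ really has dimension $d$ and not $d+1$. The latter is immediate from \Cref{translation}, as $M$ has full rank $d+1$ (it contains $I_{d+1}$) while all its columns lie on the affine hyperplane cut out by the all-ones functional, so the convex hull is $d$-dimensional.
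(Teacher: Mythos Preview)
Your proposal is correct and follows exactly the approach the paper intends: the paper states the corollary as an immediate application of \Cref{theorem:tu} with $m=d+1$ via \Cref{translation}, without writing out a separate proof, and you have simply filled in the routine translation and sharpness verification. Nothing more is needed.
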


It appears that this result may be the first instance that Seymour's decomposition theorem is successfully used in the setting of lattice polytopes. It is reasonable to believe that more results on unimodular polytopes may be obtained in this way\footnote{In fact, already shortly after this paper appeared as a preprint, Seymour's decomposition theorem has also been successfully applied for subclasses of smooth Fano polytopes \cite{Binnan}.}.

\medskip

\begin{rmk}
    It seems a very challenging open question to find similar sharp results on the number of lattice points respectively vertices of {\em $\Delta$-modular polytopes}: lattice polytopes $P$ where $\Delta$ is the maximal normalized volume of any full-dimensional lattice simplex contained in $P$. Finding sharp bounds for the number of lattice points is a hard problem even in the case for lattice simplices. This question is motivated by recent papers \cite{glanzer, boundedsubdeterminants, Oxley, Lee, averkov2024maximal, Paat, kriepke2025size} on the size of integer programs with bounded subdeterminants.
    \label{bounded}
\end{rmk}

\section{The proof of the column number bound for polytopal TU-matrices}

\label{sec:proof}

The remainder of this paper contains the proof of \Cref{theorem:tu} using Seymour's decomposition theorem. First, we will look at network matrices and their transposes, before dealing with the inductive step using so-called $k$-sums and $\Delta$-sums. As it turns out, instead of focusing on column sums equal to $1$ it is often beneficial to relax this to the more general condition that the column sums are just positive and odd. For the induction to work, it will also be necessary to consider more general linear functionals than just summing the column entries.

Throughout the paper we use $\pm$ to denote $+$ or $-$.

\subsection{Network matrices}
\label{netw}

 We say $M$ is an {\em $A_0 \times A$-network matrix} \cite{Schrbook} (or {\em tree matrix} \cite{Seymour-survey}) if it is associated to the set of arcs $A_0$ of a directed tree $T$ with vertex set $V$ and to the set of arcs $A$ of a digraph $D$ on $V$ in the following way. Let $e$ be an arc of $D$ with vertices $s \rightarrow t$, and let $H$ be the unique path in the underlying undirected graph of $T$ from $s$ to $t$. Then the column of $M$ corresponding to $e \in A$ contains at the row position corresponding to an arc $e'$ of $T$ the entry $1$ if $e'$ is passed with the same orientation in $H$, $-1$ if it is passed with the reverse orientation in $H$, and $0$ if this edge is not used in $H$.

\begin{proposition}
    Let $M$ be an $A_0 \times A$-network matrix such that all columns are pairwise different and have positive odd column sums. Then $|A|$ is bounded from above by $\frac{(|A_0|+1)^2}{4}$. Equality implies that $|A_0|$ is odd and the undirected graph underlying the digraph $D$ is a complete bipartite graph with two parts of the same size $\frac{|A_0|+1}{2}$.
    \label{prop:network}
\end{proposition}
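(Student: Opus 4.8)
The plan is to translate the combinatorial data of the network matrix into a single integer labelling of the vertices of the tree $T$, and then to read off both the column-sum condition and the number of distinct columns directly from this labelling. The guiding idea is that the all-ones functional (the column sum) should telescope along tree paths.

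First I would construct a potential $\psi \colon V \to \Z$. Since the undirected graph underlying $T$ is acyclic, the system $\psi(v) - \psi(u) = 1$, ranging over all arcs $(u \to v) \in A_0$, is consistent and has a solution, unique up to an additive constant. The purpose of this choice is the following telescoping identity: for an arc $e = (s \to t) \in A$ with associated tree path $s = w_0, w_1, \ldots, w_k = t$, one checks step by step that the entry of the column in the row of the tree arc joining $w_i$ and $w_{i+1}$ equals $\psi(w_{i+1}) - \psi(w_i)$, regardless of that tree arc's orientation (the two orientations are matched by the two defining cases $\pm 1$ of the network matrix). Summing over the path and noting that arcs off the path contribute $0$, the column sum of $e$ equals $\psi(t) - \psi(s)$.

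With this in hand the hypotheses become transparent. Since $\psi$-values of tree-adjacent vertices differ by exactly $1$, the parity of $\psi$ realises the unique bipartition $V = V_0 \sqcup V_1$ of the tree. The column of $e = (s \to t)$ then has odd column sum precisely when $\psi(t) - \psi(s)$ is odd, i.e. when $s$ and $t$ lie in different parts, and positive column sum precisely when $\psi(t) > \psi(s)$. Moreover a nonzero column is the signed indicator vector of a tree path, so its support recovers the unordered pair $\{s,t\}$ and its signs recover the direction of traversal; hence distinct arcs (as ordered pairs) correspond bijectively to distinct columns. Consequently $|A|$ equals the number of arcs of $D$, each of which joins the two parts and is oriented in the increasing-$\psi$ direction.

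The counting is then immediate, and this is where I expect the factor-of-two improvement over the naive bound to sit: for each unordered pair $\{a,b\}$ with $a \in V_0$ and $b \in V_1$, exactly one of the two orientations increases $\psi$ (as $\psi(a) \neq \psi(b)$), so at most one arc of $A$ uses this pair. Therefore $|A| \le |V_0|\,|V_1| \le (|V_0| + |V_1|)^2/4 = (|A_0|+1)^2/4$, using $|V| = |A_0| + 1$. For the equality clause, writing $n = |A_0| + 1$, the condition $|A| = n^2/4$ forces both $|A| = |V_0|\,|V_1|$ and $|V_0|\,|V_1| = n^2/4$; tightness of the arithmetic--geometric mean inequality gives $|V_0| = |V_1| = n/2$ (so $n$ is even and $|A_0|$ odd), while $|A| = |V_0|\,|V_1|$ means every admissible bipartite pair is used, whence the undirected graph underlying $D$ is the complete bipartite graph on two parts of size $(|A_0|+1)/2$. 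The only points requiring care are the sign-bookkeeping in the telescoping identity and the verification that positivity forces a unique orientation per pair; the rest is routine.
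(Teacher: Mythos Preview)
Your argument is correct and rests on the same key object as the paper's proof: a vertex potential $\psi$ with $\psi(t)-\psi(s)$ equal to the column sum of the arc $s\to t$. The paper obtains this potential via matrix algebra, setting $u^\top=(\mathbf{1}^\top\tilde L^{-1},0)$ for the truncated incidence matrix $\tilde L$ of $T$ and checking $u^\top N=z^\top$; you instead build $\psi$ directly by solving $\psi(v)-\psi(u)=1$ along tree arcs and verify the telescoping by hand. From there the paper argues indirectly, showing that an odd cycle in the underlying graph of $D$ would force $\pm z_{a_1}\pm\cdots\pm z_{a_r}=0$ with all $z_{a_i}$ odd and $r$ odd; you argue directly, noting that the parity of $\psi$ is the tree's bipartition and that every admissible arc crosses it in the $\psi$-increasing direction. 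Your route is slightly more elementary and makes the orientation constraint (at most one arc per unordered bipartite pair) fully explicit, which the paper leaves implicit in its appeal to the edge bound for simple bipartite graphs; conversely, the paper's incidence-matrix formulation connects more visibly to the standard network-matrix framework in Schrijver's book.
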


\begin{proof}
We use notation and results from Schrijver's book \cite{Schrbook} (see the proof of equation (34) on page 277). Let $N$ be the $V \times A$-incidence matrix of $D$ and $L$ be the $V \times A_0$-incidence matrix of $T$. Moreover, let $\tilde{N}$ and $\tilde{L}$ be given by deleting the last row in $N$, respectively $L$. Note that $L M = N$, $\tilde{L} M = \tilde{N}$, and $\tilde{L}$ is invertible. Let us define $\1 := (1, \ldots, 1) \in \Z^{A_0}$ and let $z=(z_1, \ldots, z_A) \in \Z^A$ be such that $\1^\top M = z^\top$. Thus, $\1^\top\tilde{L}^{-1} \tilde{N} = z^\top$. Define $u^\top \in \Z^{V}$ as $\1^\top \tilde{L}^{-1}$ extended by $0$ in the last coordinate. Then we see $u^\top N = z^\top$.

Let $G$ be the undirected graph underlying $D$. We will show that $G$ is bipartite by observing that all cycles must be even. For this, let $v_1, \ldots, v_r$ be vertices in $V$ forming a cycle in $G$ and assume that $r$ is odd. We set $v_{r+1} := v_1$. Denoting the successive corresponding arcs in $A$ by $a_1, \ldots, a_r$, we get from $u^\top N = z^\top$ that $u_{v_{i+1}} = u_{v_i} \pm z_{a_i}$ for $i=1, \ldots, r$. Hence, $u_{v_{r+1}} = u_{v_1} \pm z_{a_1} \pm \cdots \pm z_{a_r}$, so $\pm z_{a_1} \pm \cdots \pm z_{a_r} = 0$. As by our assumption each entry of $z$ is odd, and $r$ is odd, we derive at a contradiction.

It is well-known that a bipartite graph $G$ with $|V|$ vertices can have at most $\frac{|V|^2}{4}$ edges, with equality if and only if $|V|$ is even and $G$ is a complete bipartite graph with two parts of size $\frac{|V|}{2}$. Now, use $|V|=|A_0|+1$.
\end{proof}

\subsection{Transposes of network matrices}

In this quite different setting there is even a {\em linear} bound on the number of rows of a network matrix in terms of its columns.

\begin{theorem}
    Let $M$ be an $A_0 \times A$-network matrix. Then the set of pairwise different rows of $M$ with positive row sums is bounded from above by $3|A|-3$. Moreover, if $|A| \ge 3$, then the set of pairwise different rows of $M$ with positive odd row sums is bounded from above by $3|A|-5$. 
    \label{theorem:transpose}
\end{theorem}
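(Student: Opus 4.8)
The plan is to translate the rows of $M$ into directed cut vectors of the digraph $D$ and to bound the relevant cuts by a combinatorial counting argument on a reduced tree. As in the proof of \Cref{prop:network}, if $e'$ is a tree arc of $T$, then deleting $e'$ splits $V$ into the two sides of a fundamental cut; writing $S_{e'}$ for the side containing the tail of $e'$, the row of $M$ indexed by $e'$ is exactly the directed cut vector $c_{S_{e'}}$ whose entry on an arc $a=(s\to t)\in A$ is $+1$ if $a$ leaves $S_{e'}$, $-1$ if $a$ enters $S_{e'}$, and $0$ otherwise. Setting $b(v):=\mathrm{outdeg}_D(v)-\mathrm{indeg}_D(v)$, one checks that the row sum equals $b(S_{e'})=\sum_{v\in S_{e'}}b(v)$, so a row has positive sum precisely when $b(S_{e'})>0$. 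Here $\sum_{v\in V}b(v)=0$ and $\sum_{v:\,b(v)>0}b(v)\le |A|$.

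First I would reduce the tree. Any tree arc not lying on the path $H_a$ of some $a\in A$ produces the zero row, so I contract all such arcs; afterwards every tree arc is crossed by some arc of $D$. Next, if an internal vertex $w$ has tree-degree $2$ and is not an endpoint of any arc in $A$, then its two incident tree arcs induce the same fundamental cut, and hence the same row up to sign, because $w$ carries no arc-incidences; I suppress all such vertices. This yields a reduced tree $\hat T$ whose leaves and degree-$2$ vertices are arc-endpoints and whose remaining internal vertices are genuine branch vertices, so that the vertex set of $\hat T$ is controlled by the arc-endpoints of $D$.

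The key localization step is to confine the counted cuts to a small subtree. Viewing $b$ as the divergence of the \emph{imbalance flow} $\sum_{a\in A}f_a$, where $f_a$ is the unit flow along $H_a$ from the tail to the head of $a$, an edge $e'$ satisfies $b(S_{e'})\neq 0$ exactly when it carries nonzero net flow; and an edge lies outside the Steiner subtree of $\hat T$ spanning the set $Z:=\{v:\,b(v)\neq 0\}$ of unbalanced vertices precisely when one of its two sides meets $Z$ in nothing, forcing $b(S_{e'})=0$. Since distinct rows with positive sum are at most as numerous as the tree edges with positive sum, which in turn are among the edges of nonzero net flow, I get that the number of distinct positive-sum rows is bounded by the number of edges of the Steiner subtree of $\hat T$ spanning $Z$, a tree all of whose leaves lie in $Z$.

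It remains to count these Steiner edges linearly in $|A|$, and this is the crux. The idea is a charging argument: sweeping along $\hat T$, the cut vector changes only at arc-endpoints and at branch vertices, and each change can be charged to an incident arc; each arc has two endpoints, and I expect the branching to add at most one further charge per arc, so that every arc is charged at most three times, giving the bound $3|A|-3$ after a global normalization for the root. The hard part will be making the charging at branch vertices rigorous, in particular controlling the degree-$2$ vertices of the Steiner subtree that are balanced ($b=0$): each such vertex either directly consumes at least two arc-incidences or carries a hanging balanced subtree that consumes arcs, and one must verify that these ``wasted'' arcs keep the total charge per arc at most three rather than four. Finally, for the odd refinement I would run the same count under the positive-\emph{odd} hypothesis: a parity analysis of the subtree sums $b(S_{e'})$ shows that, once $|A|\ge 3$, at least two of the cuts counted above must have even sum and are therefore excluded, improving the bound to $3|A|-5$, with the remaining small-$|A|$ cases checked directly.
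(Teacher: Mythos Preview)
Your translation of rows into directed-cut vectors and the tree reduction (contract unused edges, suppress degree-$2$ non-endpoints) are fine, and the observation that an edge outside the Steiner subtree of $Z=\{v:b(v)\neq0\}$ has zero row sum is correct. But from there on the argument is only a sketch, and the intermediate bound you isolate is genuinely too weak: take $|A|=m$ arcs forming a matching on the $2m$ leaves of an unrooted trivalent tree. Every leaf is an arc-endpoint and every internal vertex has degree $3$, so nothing is suppressed; every leaf has $b=\pm1$, so $Z$ is the full leaf set and the Steiner subtree is the whole tree, which has $4m-3$ edges, not $3m-3$. Thus ``number of Steiner edges'' cannot by itself give the theorem. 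You acknowledge this by deferring everything to a ``charging argument'' that is not carried out (``The hard part will be\ldots''), and the odd refinement is likewise just asserted. As written, the proposal does not establish either bound.

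The paper takes a different route that avoids this obstacle. It first reduces modulo $2$: each tree edge $e$ gets the \emph{pattern} $I_e=\{j:e\in P_j\}\subseteq[m]$, and a short $2\times2$-minor argument shows that two distinct rows of $M$ with positive sum must already have distinct patterns (otherwise one row would be $\pm$ the other). So it suffices to bound the number of distinct nonempty patterns. This is done incrementally: adding the $(i{+}1)$st path to $P_1,\ldots,P_i$ and tracking the connected components, each ``merge'' of $P_{i+1}$ with a component can create at most three new patterns (the singleton $\{i{+}1\}$ plus at most two splittings $I_e\mapsto\{I_e,\,I_e\cup\{i{+}1\}\}$, one for each end where $P_{i+1}$ enters or leaves the component), and there are at most $m-1$ merges in total, yielding $3m-3$. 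For $m\ge3$ the last merge necessarily produces at least one (and for $3m-3$ patterns at least two) pair $I_e,\,I_e\cup\{m\}$ with opposite parities, so at most $3m-5$ patterns are odd. This replaces your unproved charging step with a concrete count.
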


Note that for $|A| \ge 7$ we have $3|A|-5 \le \frac{(|A|+1)^2}{4}$.

We remark that the bound $3|A|-3$ is already known \cite[Theorem~3.1]{bonin}, where an elegant proof is given in the setting of cographical geometries. Here, we give a different proof that allows for the desired improvement of the bound in the case of odd row sums. For this, let us recall that $[m] := \{1, \ldots, m\}$. 

We fix an undirected tree $T$ that contains $m$ undirected paths $P_1, \ldots, P_m$. For each edge $e$ of $T$ we define 
\[I_e := \{j \in [m] \;:\; e \in E(P_j)\}.\]
If $I_e \not=\emptyset$, i.e., $e$ is contained in at least one of the paths, then we call $I_e$ a {\em pattern} of $P_1, \ldots, P_m$. One should think of $I_e$ as a vector in $\{0,1\}^m$ with $1$ at the $j$th position if $e$ is in $P_j$. As an example, for $m=2$ with two paths $P_1, P_2$ we have at most three distinct patterns, namely, $\{1\},\{2\},\{1,2\}$. Note that different edges may have the same pattern. The reader is invited to draw a picture of three paths in a tree and to compute their set of distinct patterns before proceeding.

\begin{proposition}
In this situation, for $m \ge 2$ the number of distinct patterns is at most $3m-3$. Moreover, for $m \ge 3$ the number of distinct patterns of odd size is at most $3m-5$.
\label{prop:graph}
\end{proposition}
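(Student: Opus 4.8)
The plan is to argue by induction on the number of paths $m$, after first recording the underlying geometry. Since $T$ is a tree, for each $j$ the edge set $E(P_j)$ is exactly the set of edges $e$ with $j\in I_e$, and any two of the paths meet in a (possibly empty) subpath. Thus the patterns are precisely the coverage profiles of the $m$ subpaths $P_1,\dots,P_m$, and we may assume $T=\bigcup_j P_j$ and that every $P_j$ has at least one edge (trivial paths occur in no pattern). The base case $m=2$ is immediate: the only possible patterns are $\{1\},\{2\},\{1,2\}$, so there are at most $3=3\cdot 2-3$ of them.

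For the inductive step I would delete one path $P_k$ and apply the hypothesis to the remaining $m-1$ subpaths. The bookkeeping is clean: deleting $P_k$ merely strips the label $k$ from the profiles of the edges of $P_k$. Writing $p$ for the number of patterns of $P_1,\dots,P_m$ and $p'$ for that of the reduced family, and letting $b$ be the number of distinct profiles \emph{with respect to the other paths} that occur along $P_k$ and $a$ the number of these that occur nowhere off $P_k$, a short count (profiles containing $k$ are disjoint from those not containing $k$) gives $p=p'+(b-a)$. The task is therefore to show that for a suitable choice of $k$ one has $b-a\le 3$, which telescopes the base value $3\cdot 2-3$ up to $3m-3$.

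This last point is where I expect the real difficulty to lie, and it is what forces a careful choice of which path to delete rather than an arbitrary one. The profiles seen along $P_k$ vary in an interval-like fashion: because each other path meets $P_k$ in a subpath, traversing $P_k$ changes the profile only at the at most $2(m-1)$ places where some other path begins or ceases to overlap it, so crudely $b\le 2m-1$. To pass from this to $b-a\le 3$ I would choose $P_k$ to have an endpoint at a leaf of $T$, so that its pendant portion contributes profiles that are genuinely lost (raising $a$), and I anticipate needing the full subpath-in-a-tree geometry together with a preliminary reduction (suppress every degree-$2$ vertex whose two incident edges carry equal profiles, so that every remaining leaf and degree-$2$ vertex is an endpoint of some path) in order to control exactly how many residual profiles along $P_k$ can reappear elsewhere. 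Bounding $b-a$ by $3$ uniformly is the crux of the argument.

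For the refinement to patterns of odd size I would bring in a parity reformulation: orienting $T$ from a root, the profile of the parent edge of a vertex $v$ is the $\mathbb{F}_2$-sum, over the path-endpoints lying in the subtree at $v$, of the standard basis vectors indexed by their paths, so that $|I_e|$ is odd exactly when an odd number of path-endpoints lie below $e$. Since the total number $2m$ of endpoints is even, this parity is symmetric in the two sides of $e$, and deleting a path flips it precisely along that path's edges. I would rerun the same induction while tracking only the odd-size profiles, using this parity control (which is the subtle part, since removing a path swaps odd and even profiles along it) to extract the extra saving from $3m-3$ down to $3m-5$; finally I would check the small cases needed to anchor the induction ($m=3$ is trivial, as $[3]$ has only four odd-size subsets, whereas $m=4,5$ require the explicit argument).
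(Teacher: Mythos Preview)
Your proposal is an outline, not a proof: the step you yourself identify as ``the crux'' --- that a suitable path $P_k$ can be chosen with $b-a\le 3$ --- is never carried out. The heuristic of picking $P_k$ with an endpoint at a leaf of $\bigcup_j P_j$ does not obviously deliver this; for instance, on a path graph with $P_1$ the whole path and $P_2,\dots,P_m$ pairwise disjoint interior subpaths, the \emph{only} leaf-endpoint path is $P_1$, yet deleting it leaves $m-1$ components. (That particular configuration still happens to satisfy $p-p'\le 3$, but the point is that your stated mechanism --- controlling how many components the deleted path meets --- already fails, so some genuinely different argument would be needed.) There is also a small bookkeeping slip: if $\{k\}$ is itself a pattern, deleting $P_k$ kills that pattern without producing a replacement, and one finds $p-p'=(b-a)+1$ rather than $b-a$.

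The paper sidesteps this difficulty by running the process in the opposite direction and counting globally rather than locally. It adds the paths $P_1,\dots,P_m$ one at a time and calls it a \emph{merge} each time the newly added path meets a connected component of the union of the previous ones; per merge at most two old patterns $I_e$ can get doubled into both $I_e$ and $I_e\cup\{i+1\}$, so each merge raises the pattern count by at most $3$. The idea that replaces your search for a single deletable path is an easy partition-coarsening bound: a ``merge set'' starts as $\{\{1\},\dots,\{m\}\}$ and loses one block per merge, so there are at most $m-1$ merges in total, whence $p\le 3(m-1)$. For the odd-size refinement the paper revisits the last merge of a (near-)extremal configuration and observes that in each doubled pair $I_e,\,I_e\cup\{m\}$ exactly one member has even size, yielding the extra saving down to $3m-5$; this is a concrete argument, whereas your $\mathbb{F}_2$ parity reformulation and ``rerun the same induction'' remain only a plan.
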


\begin{proof}

Let $1 \le i \le m-1$. We consider the first $i$ paths $P_1, \ldots, P_i$. We will denote the subgraph $P_1 \cup \cdots \cup P_i$ of $T$ by $H$. Let $H$ consist of the connected components $B_1, \ldots, B_{r_i}$.

Now, we consider the situation with one more path $P_{i+1}$. Clearly, if an edge of $P_{i+1}$ is not contained in $H$, then we get a `new' pattern $\{i+1\}$. From now on, we are interested in edges of $H$. We denote the `new' pattern of an edge $e$ of $H$ by $\hat{I}_e \subseteq \{1, \ldots, i+1\}$. If $e$ is contained in $P_{i+1}$, then $\hat{I}_e = I_e \cup \{i+1\}$; otherwise, $\hat{I}_e = I_e$. 

If $P_{i+1}$ intersects (even if just in a vertex) one of the connected components, say $B_1$, we call this a {\em merge}. In this situation, we will show that it can happen at most twice that for an 'old' pattern $I_e$ both $I_e$ {\em and} $I_e \cup \{i+1\}$ are `new' patterns. So, let us assume that there are two edges $e$ and $e'$ of $B_1$ with $I_e = I_{e'}$ such that $\hat{I}_{e} = I_{e} \cup \{i+1\}$ (so $e \in P_{i+1}$) and $\hat{I}_{e'} = I_{e'}=I_e$ (so $e' \not\in P_{i+1}$). Let us remark that as $B_1$ is a tree, all edges of $B_1$ with the same pattern form a subpath of $B_1$. Hence, the path containing $e$ and $e'$ contains between them one of the two edges of $B_1$ where $P_{i+1}$ enters or leaves the tree $B_1$. We deduce that there are at most two `old' patterns with this behavior, as desired. We conclude that in such a merge the number of distinct patterns that are different from $\{i+1\}$ increases at most by two. Overall, the number of distinct patterns after a merge increases at most by three. 

As after the very first merge -- where just two paths intersect -- we have at most $3$ distinct patterns, we see that if $\ell$ denotes the number of all occurring merges for $P_1, \ldots, P_m$, the number of distinct patterns for $P_1, \ldots, P_m$ is bounded from above by $3 \ell$.

It thus remains to show $\ell \le m-1$. For this, we define the {\em merge set} and initialize it as $\{\{1\}, \{2\}, \ldots, \{m\}\}$. Whenever in our above procedure a merge happens we also unite the elements of the merge set containing the indices of the corresponding paths involved. It is illustrative to consider this by an example. For instance, for $m=6$ and $i=4$, let us assume that $H$ consists of the two connected components $B_1 := P_1 \cup P_2$ and $B_2 := P_3 \cup P_4$ with merge set $\{\{1,2\},\{3,4\},\{5\},\{6\}\}$. Next, we assume that $P_5$ intersects with $B_1$ and $B_2$. In this case, the first merge of $P_5$ with $B_1$ would lead to the merge set $\{\{1,2,5\},\{3,4\},\{6\}\}$, and the next merge of $P_5$ with $B_2$ changes the merge set to $\{\{1,2,3,4,5\},\{6\}\}$. It remains to notice that in each merge the size of the merge set decreases by one, so there can be at most $m-1$ merges, thus, $\ell\le m-1$. 

Let us consider the additional statement. Let $m \ge 3$, and let us assume that we have $3m-3$ distinct patterns for $P_1, \ldots, P_m$, so $\ell = m-1$ merges occurred. Let us consider the last merge that occurred (because of $\ell \ge 2$, this is not the first merge). Here, in above notation there must necessarily exist two edges $e$ such that $I_e \cup \{m\}$ and $I_e$ are `new' patterns. Hence, there must exist at least two distinct patterns of even size, so at most $3m-5$ distinct patterns of odd size. Analogously, if we have $3m-4$ distinct patterns, also at least $m-1$ merges must have occurred. Again considering the last merge we see that there has to be at least one edge $e$ with patterns $I_e \cup \{m\}$ and $I_e$, one of them necessarily of even size. Hence, there are at most $3m-5$ distinct patterns of odd size.
\end{proof}

\begin{proof}[Proof of \Cref{theorem:transpose}]
Let $m := |A|$. We consider the underlying undirected graph of $T$ and the $m$ paths $P_1, \ldots, P_m$ on $T$ associated to the $m$ edges of $A$. We define $\bar{M}$ as the $|E(T)| \times m$-matrix with entries in $\{0,1\}$ given by reducing any entry of $M$ modulo $2$. Note that an edge $e$ of $T$ lies in $E(P_j)$ for $j \in [m]$ if and only if the corresponding entry in $\bar{M}$ is equal to $1$. Hence, $I_e$ equals the `support set' of the corresponding row in $\bar{M}$. Let us also observe that, as each row vector of $M$ has only entries in $\{-1,0,+1\}$, having odd row sum in $M$ is equivalent to odd row sum in $\bar{M}$. 

In order to apply \Cref{prop:graph} it suffices to show that distinct rows in $M$ with positive row sum stay distinct in $\bar{M}$. Assume that there are two arcs $e$ and $e'$ in $A_0$ whose corresponding rows in $M$ have positive rows sum that map to the same row in $\bar{M}$, i.e., $I_e = I_{e'}$. If there would be two paths $P_i$ and $P_j$ in $T$ (for $i,j \in I_e=I_{e'},\, i\not=j$) such that $M_{e,i}=M_{e,j}$ but $M_{e',i} \not=M_{e',j}$, then this would lead to a contradiction (as the corresponding $2 \times 2$-submatrix of $M$ has non-zero determinant and only entries in $\{-1,1\}$, its determinant would be equal to $\pm 2$). Hence, the $e$-row of $M$ equals $\pm$ the $e'$-row of $M$. As both rows have positive row sum, we see that they must be equal.
\end{proof}

\subsection{On $k$-sums and $\Delta$-sums of TU-matrices}
\label{k-sums}

Let us recall some operations on TU-matrices, where we follow the notation and definitions given in Seymour's survey article \cite{Seymour-survey}. By convention, vectors are considered as column vectors.

\medskip

In this subsection, we assume that $A$ is an $m_1 \times n_1$-matrix and $B$ is an $m_2 \times n_2$-matrix. 

\begin{definition}
    We say 
    \[\begin{pmatrix}A & 0\\ 0 & B\end{pmatrix}\]
    is the {\em $1$-sum} of TU-matrices $A$ and $B$.
    \label{1-sum}
\end{definition}

\begin{definition}
    We say 
    \[\begin{pmatrix}A & u v^\top\\ 0 & B\end{pmatrix}\]
    is the {\em $2$-sum} of TU-matrices $(A | u)$ and $\begin{psmallmatrix}v^\top\\B\end{psmallmatrix}$.
    \label{2-sum}
\end{definition}

Note that adding a standard basis vector to a TU-matrix as a new column is a $2$-sum construction (e.g., choose $A=1=u$ in order to add the first standard basis vector).

As in \cite{Seymour-survey}, we define the {\em size} of an $m\times n$-matrix as $m+n$. The assumptions on the sizes in the two following definitions correspond precisely to the condition that the size of each `factor' TU-matrix in a $k$-sum must be strictly smaller than the size of the whole TU-matrix.

\begin{definition}
    If the sizes of $A$ and $B$ are each at least 4, we say 
    \[\begin{pmatrix}A & C\\ 0 & B\end{pmatrix}\]
    is a {\em $3$-sum} of TU-matrices $(A | u_1 | u_2 | u_3)$ and $\begin{psmallmatrix}v^\top_1\\v^\top_2\\v^\top_3\\B\end{psmallmatrix}$, where 
    $u_1 + u_2 + u_3 = 0$ and $v_1 + v_2 + v_3 = 0$, and every column of $C$ is one of $\pm u_1, \pm u_2, \pm u_3, 0$ and 
    every row of $C$ is one of $\pm v^\top_1, \pm v^\top_2, \pm v^\top_3, 0$.\label{3-sum}
\end{definition}

We call the following second type of a $3$-sum, also described in \cite{Seymour-survey}, $\Delta$-sum. This notion was suggested by an anonymous referee as it is related to the Delta-Wye transformation.

\begin{definition}
    If the sizes of $A$ and $B$ are each at least 4, we say 
    \[\begin{pmatrix}A & u' v^\top\\ v' u^\top & B\end{pmatrix}\]
    is a {\em $\Delta$-sum} of TU-matrices $\begin{pmatrix}A & u' & u'\\u^\top & 0 & x\end{pmatrix}$ and $\begin{pmatrix}v^\top & 0 & x\\B & v' & v'\end{pmatrix}$ with $x = \pm 1$.\label{4-sum}
\end{definition}

Let us introduce some more terminology. 

\begin{definition}
Let $M$ be an $m\times n$-matrix TU-matrix. We say $M$ is {\em $w$-valued} for $w \in \Z^n$ if there exists $f \in (\Z^{m})^* \setminus \{0\}$ such that $f$ evaluates on the $j$th column of $M$ to $w_j$ for any $j\in[n]$. 
\label{definition:odd}
\end{definition}

In order for the induction step in the proof of \Cref{theorem:tu} to work, one needs to modify the functional evaluating to $w$ accordingly.
   
\begin{lemma}
Let $M$ be a TU $m \times n$-matrix that is $w$-valued with functional $f$ and has distinct columns such that $M$ is a $k$-sum (for some $k \in [3]$) or a $\Delta$-sum in the notation of Definitions~\ref{1-sum}-\ref{4-sum}. Let $w=(w_1,w_2)$ for $w_1 \in \Z^{n_1}$ and $w_2 \in \Z^{n_2}$.
\begin{enumerate}
    \item If $M$ is a $2$-sum, then $\begin{psmallmatrix}v^\top\\B\end{psmallmatrix}$ has distinct columns, 
    and there is a functional $f' \in (\Z^{m_2+1})^*$ such that $\begin{psmallmatrix}v^\top\\B\end{psmallmatrix}$ is $w_2$-valued with $f'(e_j) =f(e_{m_1+j-1})$ for $j=2, \ldots, m_2+1$.
    \item If $M$ is a $3$-sum and $\pm v^\top_1$ and $\pm v^\top_2$ appear as rows of $C$, then $\begin{psmallmatrix}v^\top_1\\v^\top_2\\B\end{psmallmatrix}$ has distinct columns, and there is a functional $f' \in (\Z^{m_2+2})^*$ such that $\begin{psmallmatrix}v^\top_1\\v^\top_2\\B\end{psmallmatrix}$ is $w_2$-valued with $f'(e_j)=f(e_{m_1+j-2})$ for $j=3, \ldots, m_2+2$.
    \item If $M$ is a $\Delta$-sum, then $\begin{psmallmatrix}A\\u^\top\end{psmallmatrix}$ and $\begin{psmallmatrix}v^\top\\B\end{psmallmatrix}$ have each distinct columns, and 
    \begin{itemize}
        \item there is a functional $f' \in (\Z^{m_1+1})^*$ such that $\begin{psmallmatrix}A\\u^\top\end{psmallmatrix}$ is $w_1$-valued with $f'(e_j)=f(e_j)$ for $j=1, \ldots, m_1$, and
        \item there is a functional $f'' \in (\Z^{m_2+1})^*$ such that $\begin{psmallmatrix}v^\top\\B\end{psmallmatrix}$ is $w_2$-valued with $f''(e_j)=f(e_{m_1+j-1})$ for $j=2, \ldots, m_2+1$.
    \end{itemize}
    \end{enumerate}
\label{lem:odd}
\end{lemma}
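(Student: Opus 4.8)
The plan is to handle all four cases by one mechanism. Write the functional $f$ as a row vector and split its coordinates along the two row-blocks of the decomposition, $f = (g,h)$ with $g \in (\Z^{m_1})^*$ acting on the top $m_1$ rows and $h \in (\Z^{m_2})^*$ on the bottom rows. The hypothesis that $M$ is $w$-valued then reads off, block by block, as identities for the $f$-values on the left columns (from the first factor) and on the right columns (from the second factor). In each case I would expand the $f$-value of a relevant column using the explicit block structure (the product $uv^\top$, the matrix $C$, or the pair $u'v^\top$, $v'u^\top$), recognise the result as the value of an explicitly defined functional $f'$ (resp.\ $f''$) on the corresponding column of the reduced factor, and fix the coordinates of $f'$ that the statement leaves free equal to the coefficients produced by this expansion.

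Concretely, for the $2$-sum the $k$th right column of $M$ is $\begin{psmallmatrix} v_k u \\ B_{\bullet k}\end{psmallmatrix}$, so $(w_2)_k = v_k\, g(u) + h(B_{\bullet k})$. Since the $k$th column of $\begin{psmallmatrix}v^\top\\ B\end{psmallmatrix}$ is $\begin{psmallmatrix} v_k \\ B_{\bullet k}\end{psmallmatrix}$, the functional given by $f'(e_1) := g(u)$ and $f'(e_j) := f(e_{m_1+j-1})$ for $2 \le j \le m_2+1$ reproduces exactly these values. The $\Delta$-sum is identical in spirit: the left columns give $(w_1)_j = g(A_{\bullet j}) + u_j\, h(v')$ and the right columns give $(w_2)_k = v_k\, g(u') + h(B_{\bullet k})$, so I would set the free coordinate $f'(e_{m_1+1}) := h(v')$ for the factor $\begin{psmallmatrix}A\\ u^\top\end{psmallmatrix}$ and the free coordinate $f''(e_1) := g(u')$ for the factor $\begin{psmallmatrix}v^\top\\ B\end{psmallmatrix}$, the remaining coordinates being dictated by $f$ as in the statement.

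The main obstacle is the $3$-sum, where the right block $C$ is not a single rank-one product. Here I would use that each row of $C$ is one of $\pm v_1^\top, \pm v_2^\top, \pm v_3^\top, 0$: writing row $i$ of $C$ as $\sigma_i v_{\pi(i)}^\top$, the value $g(C_{\bullet k}) = \sum_i g_i \sigma_i (v_{\pi(i)})_k$ collects into $c_1 (v_1)_k + c_2 (v_2)_k + c_3 (v_3)_k$ with $c_\ell = \sum_{i : \pi(i) = \ell} g_i \sigma_i$. The defining relation $v_3 = -(v_1 + v_2)$ then collapses this to $a_1 (v_1)_k + a_2 (v_2)_k$ with $a_1 = c_1 - c_3$ and $a_2 = c_2 - c_3$, so that $(w_2)_k = a_1 (v_1)_k + a_2 (v_2)_k + h(B_{\bullet k})$ is reproduced by $f'$ defined by $f'(e_1) := a_1$, $f'(e_2) := a_2$, and $f'(e_j) := f(e_{m_1+j-2})$ for $3 \le j \le m_2+2$; the hypothesis that $\pm v_1^\top$ and $\pm v_2^\top$ occur as rows of $C$ is what lets me retain precisely this pair of rows (any two suffice, since the third is their negated sum).

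Two routine verifications remain in every case. Distinctness of the columns of each reduced factor follows by contraposition: a coincidence of two factor columns forces the corresponding entries of the shared vectors (and of $B$, or of $A$) to agree --- for the $3$-sum the third row $v_3^\top$ then agrees as well, its entries being determined by $v_1, v_2$ --- whence the two associated columns of $M$ coincide, contradicting the hypothesis on $M$. Finally, the constructed functionals must be nonzero, as demanded by the definition of $w$-valued; this is where I would invoke that in our setting the entries of $w$ (hence of $w_1, w_2$) are positive, so each factor takes a positive, in particular nonzero, value on at least one column, ruling out $f' = 0$ or $f'' = 0$.
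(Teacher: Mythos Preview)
Your proposal is correct and mirrors the paper's proof almost exactly: the paper also splits $f=(f_1,f_2)$ along the two row-blocks, sets $f'=(f_1(u),f_2)$ for the $2$-sum, makes the analogous choices $(f_1,f_2(v'))$ and $(f_1(u'),f_2)$ for the $\Delta$-sum, and for the $3$-sum collects the $f$-coefficients of the rows of $C$ according to which $\pm v_\ell^\top$ they equal and then uses $v_1+v_2+v_3=0$ to reduce to two free coordinates. Your closing remark that $f'\neq 0$ needs the positivity of the $w$-entries in the intended application is in fact more explicit than the paper, which does not address this point at all.
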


\begin{proof}
Let $M$ be a $2$-sum. Assume that two columns $j_1, j_2$ of $\begin{psmallmatrix}v^\top\\B\end{psmallmatrix}$ would be equal. Then also the $(n_1+j_1)$-th and the $(n_1+j_2)$-th column of $M$ would be equal to each other, a contradiction. Let $f=(f_1,f_2)$ with $f_1 \in (\Z^{m_1})^*$ and $f_2 \in (\Z^{m_2})^*$. We observe that for $j \in [n_2]$, the $f$-value of the $(n_1+j)$-th column of $M$ equals the sum of $f_1(u) \cdot v_j$ and the $f_2$-value of the $j$-th column of $B$. For $f'=(f_1(u),f_2) \in (\Z^{1+m_2})^*$ the statements follow. 

Let $M$ be a $3$-sum. Assume that two columns $j_1$ and $j_2$ of $\begin{psmallmatrix}v^\top_1\\v^\top_2\\B\end{psmallmatrix}$ would be equal. Then $v_1 + v_2 + v_3 = 0$ implies that the $(n_1+j_1)$-th and the $(n_1+j_2)$-th column of $M$ would be equal to each other, a contradiction. Here, we use the coordinate-wise notation $f=(f_1, \ldots, f_m)$. Let us define $f'$ as follows. For $\ell \in [3]$, we define $I_\ell \subseteq [m_1]$ as the set of indices of rows of $C$ that are equal to $v^\top_\ell$ and $I'_\ell \subseteq [m_1]$ as the set of indices of rows of $C$ that are equal to $-v^\top_\ell$. Now, for $\ell \in [2]$ we define $f'_\ell := \sum_{t \in I_\ell} f_t - \sum_{t \in I'_\ell} f_t - \sum_{t \in I_3} f_t + \sum_{t \in I'_3} f_t$. For $i \in \{3, \ldots, m_2+2\}$ we define $f'_i := f_{m_1+i-2}$. It is direct to check that this choice of $f'$ satisfies the desired conditions.

Let $M$ be a $\Delta$-sum. Let $f=(f_1,f_2)$ with $f_1 \in (\Z^{m_1})^*$ and $f_2 \in (\Z^{m_2})^*$. For $j \in [n_1]$ the $f$-value of the $j$-th column sum of $M$ equals the sum of the $f_1$-value of the $j$-th column sum of $A$ and $f_2(v') \cdot u_j$. For $(f_1,f_2(v')) \in (\Z^{m_1+1})^*$ we get that $\begin{psmallmatrix}A\\u^\top\end{psmallmatrix}$ is $w_1$-valued. Assume that two columns $j_1$ and $j_2$ of $\begin{psmallmatrix}A\\u^\top\end{psmallmatrix}$ would be equal to each other. Then also the $j_1$-th and the $j_2$-th column of $M$ would be equal to each other, a contradiction. Similarly, one proves the corresponding statement for $\begin{psmallmatrix}v^\top\\B\end{psmallmatrix}$.
\end{proof}

\subsection{Technical inequalities regarding the bound in \Cref{theorem:tu}}\

We abbreviate $g(x) := \frac{(x+1)^2}{4}$. We define $h \,:\, \Z_{\ge 1} \to \Z_{\ge 1}$ as $h(x) := \lfloor g(x) \rfloor$ if $x \not= 5$, and $h(5):= 10$. Note that $h(5)=g(5)+1$, $h(x)=g(x)$ if $x\not=5$ is odd and $h(x)=g(x)-\frac{1}{4}$ if $x$ is even. This implies for any $x \in \Z_{\ge 1}$ the estimates
\begin{equation}
    h(x) \le g(x+1) \text{ and } g(x) - \frac{1}{4} \le h(x).
\label{neue-eq}
\end{equation}

We will need the following auxiliary result. 

\begin{lemma}\
\begin{enumerate}
    \item Let $x,y \in \Z_{\ge 1}$. Then $h(x) + h(y) \le h(x+y)$.
    \item Let $x \in \Z_{\ge 2}$, $y \in \Z_{\ge 1}$. Then $h(x) + h(y+1) \le h(x+y)$.
    \item Let $x \in \Z_{\ge 3}$, $y \in \Z_{\ge 1}$. Then $h(x) + h(y+2) \le h(x+y)$ 
    except if $(x,y)$ is in 
    \[\{(3,1),(3,3),(5,1)\}.\]
    \item Let $x,y \in \Z_{\ge 2}$. Then $h(x+1) + h(y+1) \le h(x+y)$ 
    except if $(x,y)$ is in 
    \[\{(2,2),(2,4),(4,2)\}.\]
    \end{enumerate}
    \label{extralemma}
\end{lemma}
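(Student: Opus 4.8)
The plan is to write $h(n) = g(n) + \delta(n)$, where the \emph{defect} $\delta(n) := h(n) - g(n)$ takes the explicit values $\delta(5) = 1$, $\delta(n) = -\tfrac14$ when $n$ is even, and $\delta(n) = 0$ when $n$ is odd and $n \neq 5$; in particular $\delta$ always lies in $\{-\tfrac14, 0, 1\}$. Each claimed inequality then reads $G \ge D$, where $G$ is a ``$g$-gap'' and $D$ a bounded combination of defects. First I would record the four gap identities, obtained by expanding $g(x) = \frac{(x+1)^2}{4}$:
\[
g(x+y) - g(x) - g(y) = \tfrac{2xy-1}{4}, \qquad
g(x+y) - g(x) - g(y+1) = \tfrac{2y(x-1)-4}{4},
\]
\[
g(x+y) - g(x) - g(y+2) = \tfrac{2y(x-2)-9}{4}, \qquad
g(x+y) - g(x+1) - g(y+1) = \tfrac{2(x-1)(y-1)-9}{4}.
\]
With these, parts (1)--(4) become, respectively, $G_i \ge D_i$ with $D_1 = \delta(x) + \delta(y) - \delta(x+y)$, $D_2 = \delta(x) + \delta(y+1) - \delta(x+y)$, $D_3 = \delta(x) + \delta(y+2) - \delta(x+y)$, and $D_4 = \delta(x+1) + \delta(y+1) - \delta(x+y)$.

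Next I would exploit the universal bound $D_i \le 1 + 1 + \tfrac14 = \tfrac94$, valid since each $\delta$ lies in $\{-\tfrac14,0,1\}$. Hence, in the \emph{main region} where the product term forces $G_i \ge \tfrac94$ --- explicitly $xy \ge 5$ for (1), $y(x-1) \ge 7$ for (2), $y(x-2) \ge 9$ for (3), and $(x-1)(y-1) \ge 9$ for (4) --- every inequality holds outright. In each case the complementary \emph{small region} is finite: the hypotheses ($x\ge1$; $x\ge2$; $x\ge3$; $x,y\ge2$, using symmetry of $G_4,D_4$) together with the product bound pin $x$ into a bounded range (e.g. $x \le 10$ in part (3)) and then $y$ into a bounded range, leaving only a couple dozen pairs $(x,y)$ to inspect.

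Finally I would verify the small region directly against the explicit values $h(1),h(2),\ldots = 1,2,4,6,10,12,16,20,\ldots$. In part (1) the pairs involving the value $5$ already lie in the main region, and the remaining pairs with $xy\le4$ satisfy the bound directly, so there are no exceptions. In part (2) the direct check of all small pairs --- including $(5,1)$ and $(2,4)$, which hold with equality --- again yields no exceptions. In parts (3) and (4) the check turns up exactly the pairs where an argument equal to $5$ (or the degenerate smallest values) makes the left-hand side overshoot: $(3,1),(3,3),(5,1)$ for part (3) and $(2,2),(2,4),(4,2)$ for part (4); all other small pairs hold, several with equality. These are precisely the listed exceptions.

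I expect the main obstacle to be organisational rather than conceptual: carefully enumerating the finite small region for each part and confirming that the exceptional pairs are exactly those listed, with no omissions or spurious extras. One caveat is that the crude estimate $D_i \le \tfrac94$ is wasteful, so the small region it carves out is somewhat larger than necessary; one could shrink it by first separating the cases in which one of the relevant arguments equals $5$ (forcing $\delta = 1$) from the generic case, where $\delta(a)+\delta(b)\le 0$ gives $D_i \le \tfrac14$. Since the resulting finite verification is short, however, the crude reduction already suffices.
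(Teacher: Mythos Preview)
Your proposal is correct and follows essentially the same approach as the paper: both arguments replace $h$ by $g$ plus a bounded correction, use the crude bound on the corrections (your $D_i \le \tfrac94$ is exactly the paper's use of $h(x)\le g(x)+1$ and $h(x+y)\ge g(x+y)-\tfrac14$) to reduce each inequality to a polynomial condition on $x,y$, and then verify the finitely many remaining pairs directly. Your defect-function packaging is a slightly tidier presentation of the same computation, and the resulting ``small regions'' coincide with the paper's.
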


All inequalities in the following proof can be easily checked using any computer algebra system e.g. \cite{wolfram}.

\begin{proof}
    (1) By \eqref{neue-eq} the statement follows if $(g(x)+1)+(g(y)+1) \le g(x+y)-\frac{1}{4}$, which is equivalent to $(x+y+1)^2-(x+1)^2-(y+1)^2 \ge 9$, which holds except if $y < \frac{5}{x}$. Hence, by symmetry we are left with checking $h(x) + h(y) \le h(x+y)$ for $x,y \le 4$.
   
   (2) By \eqref{neue-eq} the statement follows if $(g(x)+1)+(g(y+1)+1) \le g(x+y)-\frac{1}{4}$, which is equivalent to $(x+y+1)^2-(x+1)^2-(y+2)^2 \ge 9$, which, as $x \ge 2$, holds except if $y < \frac{13}{2(x-1)}$. Hence, we are left with checking $h(x) + h(y+1) \le h(x+y)$ for $2 \le x \le 7$ and $1 \le y \le 6$.

   (3) By \eqref{neue-eq} the statement follows if $(g(x)+1)+(g(y+2)+1) \le g(x+y)-\frac{1}{4}$, which is equivalent to $(x+y+1)^2-(x+1)^2-(y+3)^2 \ge 9$, which, as $x \ge 3$, holds except if $y < \frac{9}{x-2}$. Hence, we are left with checking $h(x) + h(y+2) \le h(x+y)$ for $3 \le x \le 10$ and $1 \le y \le 8$.
    
   (4) By \eqref{neue-eq} the statement follows if $(g(x+1)+1)+(g(y+1)+1) \le g(x+y)-\frac{1}{4}$, which is equivalent to $(x+y+1)^2-(x+2)^2-(y+2)^2 \ge 9$, which, as $x,y \ge 2$, holds except if $y < \frac{x+8}{x-1}$, which is at most $10$. Hence, by symmetry we are left with checking $h(x+1) + h(y+1) \le h(x+y)$ for $2 \le x,y \le 9$.
\end{proof}

\subsection{Proof of \Cref{theorem:tu}}

Let us note that $M$ is polytopal if and only if $M$ is $\1$-valued. We say $M$ is {\em prepared} if $M$ is polytopal and all columns are pairwise distinct. 
    
Let $M$ be a prepared TU-matrix with $m$ rows. We will prove that the number of columns of $M$ cannot exceeds $h(m)$ by induction on the size. 

From the TU-property it follows that one can multiply $M$ from the left by a unimodular transformation in $\text{GL}(\Z,m)$ and permute the columns in a way such that the new matrix is of the form 
\[\begin{pmatrix}I_r & M'\\0 & 0\end{pmatrix}\]
where $(I_r | M')$ is also a prepared TU-matrix.  
By the induction hypothesis, we may assume that $M = (I_m | M')$, where 
also $M'$ is prepared (hence, by our assumption $M'$ does not contain any standard basis vector as a column). In particular, $M$ has full rank $m$, and $M$ and $M'$ have constant column sum $1$. 

By the following remark we can assume that $m \ge 7$.

\begin{rmk}
By \Cref{translation} any prepared TU-matrix with $m$ rows of rank $m$ defines an $(m-1)$-dimensional unimodular polytope. Hence, for $m \le 6$ we can use \Cref{database} to determine the maximal number of columns and verify our statement. 
\label{fewrows}
\end{rmk}

 We denote the number of columns of $M'$ by $n$, so $M'$ is an $m\times n$-matrix, and $M$ is an $m \times (m+n)$-matrix and has size $2m+n$. We have to show $n \le h(m)-m$. 

\medskip
Let us give one more definition \cite{Seymour-survey}. We call any $5\times5$-matrix {\em sporadic} if it can be obtained from one of the following two matrices by permuting rows and columns and scaling some rows and columns by $-1$:

\[\begin{pmatrix}
    -1&1&0&0&1\\1&-1&1&0&0\\0&1&-1&1&0\\0&0&1&-1&1\\1&0&0&1&-1
\end{pmatrix},\quad
\begin{pmatrix}
    1&1&0&0&1\\0&1&1&0&1\\0&0&1&1&1\\1&0&0&1&1\\1&1&1&1&1
\end{pmatrix}
\]

Now, we make use of the following fundamental result (this version for TU-matrices can be found in \cite{Seymour-survey}, see \cite{Seymour-original} for the original version for regular matroids).

\begin{theorem}[Seymour's decomposition theorem]
Up to permutation of rows and columns, each TU-matrix is a network matrix, the transpose of a network matrix, a sporadic matrix, or a $k$-sum (for $k \in [3]$) or $\Delta$-sum of TU-matrices.
\end{theorem}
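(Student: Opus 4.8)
The plan is to recognise that the statement is the matrix incarnation of Seymour's regular matroid decomposition theorem, and to prove it through the structure theory of regular matroids. Recall that a matroid is \emph{regular} precisely when it can be represented over $\Q$ by a TU-matrix; under this dictionary the four basic building blocks in the statement correspond to distinguished classes of regular matroids. Network matrices represent exactly the \emph{graphic} matroids, their transposes the \emph{cographic} matroids, the two sporadic $5\times 5$ matrices represent the exceptional matroid usually denoted $R_{10}$, and the matrix $k$-sums together with the $\Delta$-sum of Definitions~\ref{1-sum}--\ref{4-sum} realise the matroid $1$-, $2$- and $3$-sums (the two matrix variants of a $3$-sum arise because a matroid $3$-separation can be displayed in matrix form in two pivot-equivalent ways). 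Thus it suffices to prove that every regular matroid is obtained from graphic matroids, cographic matroids, and copies of $R_{10}$ by repeated $1$-, $2$- and $3$-sums, and then to translate each matroid sum back into the corresponding matrix operation while checking that the size conditions in Definitions~\ref{3-sum} and \ref{4-sum} are met.

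First I would reduce to the highly connected case. A matroid with a $1$-separation decomposes as a $1$-sum, and an \emph{exact} $2$-separation yields a $2$-sum; both reductions strictly decrease the size, so by induction one may assume the underlying matroid $M$ is $3$-connected. The heart of the argument is then a structural trichotomy for $3$-connected regular matroids: either (i) $M$ has an $R_{10}$-minor, (ii) $M$ has an $R_{12}$-minor, or (iii) $M$ has neither, where $R_{12}$ is an auxiliary $12$-element regular matroid used only as a detector and not itself a building block.

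In case (i) I would invoke Seymour's \emph{splitter theorem}, using that $R_{10}$ is a \emph{splitter} for the class of regular matroids: a $3$-connected regular matroid containing an $R_{10}$-minor must equal $R_{10}$ itself, so case (i) lands directly in the sporadic block. In case (ii) the matroid $R_{12}$ is engineered so that its presence forces an exact $3$-separation of the connectivity type needed to carry out a genuine $3$-sum; extracting this separation and certifying that it produces a $3$-sum (equivalently, after a suitable pivot, a $3$-sum or a $\Delta$-sum in matrix terms) is the technical core. Finally, in case (iii) I would show that a $3$-connected regular matroid with neither an $R_{10}$- nor an $R_{12}$-minor is graphic or cographic. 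Here the tools are Tutte's excluded-minor characterisations (within regular matroids, graphic means the absence of an $M^*(K_5)$- and an $M^*(K_{3,3})$-minor, and dually for cographic) together with repeated application of the splitter theorem to grow $M$ one element at a time from a small graphic or cographic seed while tracking which forbidden minors can appear.

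The main obstacle is unquestionably the combined analysis of cases (ii) and (iii): one must run a long and delicate minor-by-minor case distinction, adding single elements via the splitter theorem, to prove that the only obstructions to being graphic or cographic among $3$-connected regular matroids are exactly an $R_{10}$-minor (absorbed in (i)) or an $R_{12}$-minor (yielding the $3$-sum). Controlling the interaction of the excluded minors, and verifying that the $3$-separation produced by an $R_{12}$-minor is genuinely exact and of the right type to induce a $3$-sum, is the part that occupies the bulk of the original proof, and I would expect essentially all of the effort to be spent there; the connectivity reductions and the final translation back to TU-matrices are comparatively routine.
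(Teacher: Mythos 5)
The statement you were asked to prove is not proved in the paper at all: it is Seymour's decomposition theorem, which the paper imports as a black box, citing Seymour's survey for the TU-matrix formulation and the original 1980 paper for the regular-matroid version. So there is no in-paper argument to compare against; the only fair comparison is with Seymour's own proof, and your outline is indeed a faithful table of contents for it: the reduction along $1$- and $2$-separations to the $3$-connected case, the role of the splitter theorem, the fact that $R_{10}$ is a splitter for regular matroids (explaining the sporadic $5\times 5$ matrices), the use of $R_{12}$ as a ``detector'' whose exact $3$-separation propagates to the whole matroid and yields the $3$-sum or $\Delta$-sum, and Tutte's excluded-minor characterisations of graphic matroids within regular ones. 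The dictionary you set up (graphic $\leftrightarrow$ network matrices, cographic $\leftrightarrow$ their transposes, matrix $3$-sum versus $\Delta$-sum as two matrix displays of one matroid $3$-separation) is also correct.

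The genuine gap is that your proposal is a roadmap, not a proof: every step that carries actual mathematical weight is named and then deferred. The splitter theorem, the verification that $R_{10}$ is a splitter, the proof that an $R_{12}$-minor forces an exact $3$-separation with both sides large enough to satisfy the size conditions of Definitions~\ref{3-sum} and \ref{4-sum}, and above all the long induction showing that a $3$-connected regular matroid with no $R_{10}$- or $R_{12}$-minor is graphic or cographic -- each of these is a substantial theorem, and together they constitute essentially the entirety of Seymour's argument, which runs to dozens of pages of delicate connectivity and minor-exclusion analysis. You acknowledge this yourself in your final paragraph, which is honest, but it means the proposal cannot be accepted as a proof of the statement; as a citation-level summary of why the theorem is true and how its proof is organised, however, it is accurate and would serve well as expository context for the paper's use of the theorem.
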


If $M'$ is a network matrix it is easy to see that also $M$ is a network matrix. Let us show the corresponding statement for transposes of network matrices. Let $M'$ be the transpose of a network matrix, for which we will use the notation $A',A'_0,T'$ as defined in Subsection~\ref{netw}. For each arc $e' \in A'$ with vertices $s_{e'} \rightarrow t_{e'}$, we define a new vertex $v_{e'}$, a new arc $a_{e'}$ with vertices $v_{e'} \to s_{e'}$, and a new arc $e$ with vertices $v_{e'} \to t_{e'}$. Then $A_0 := \{a_{e'} \,:\, e' \in A'\} \cup A'_0$ is the set of arcs of a directed tree $T$ and $A := \{e \,:\, e' \in A'\}$ the set of arcs of a digraph $D$. Then $M = (I_{|A'|} | M')$ is the transpose of the associated network matrix. Hence, whenever $M'$ is a network matrix, respectively, the transpose of a network matrix, we get our desired bound by applying \Cref{prop:network} to $M$, respectively \Cref{theorem:transpose} to $M^\top$. If $M'$ is a sporadic matrix, then $n = 5 = 10-5 = h(m)-m$.

Now, Seymour's decomposition theorem applied to $M'$ implies that after a permutation of rows and columns we may assume that $M'$ is a $k$-sum for $k\in [3]$ or $\Delta$-sum, where the $m_1 \times n_1$-matrix $A$ and $m_2 \times n_2$-matrix $B$ are given as in \Cref{k-sums}. Note that $m_1+m_2 = m$ and $n_1+n_2 = n$. We will consider these four cases separately.

\medskip

Let $M'$ be a $1$-sum. By the induction hypothesis $\left(e_1 \cdots e_{m_1} | A\right)$ and $\left(e_{m_1+1} \cdots e_{m} | B\right)$ are prepared TU-matrices (as they are submatrices of smaller sizes) such that $m_1+n_1 \le h(m_1)$ and $m_2+n_2 \le h(m_2)$. Hence, $n =n_1+n_2 \le h(m_1)-m_1+h(m_2)-m_2 \le h(m_1+m_2) - (m_1+m_2) = h(m_1+m_2) - m$ by \Cref{extralemma}(1).

\medskip

Let $M'$ be a $2$-sum. As $M'$ is prepared and contains no standard basis vectors as column vectors, we have $m_1 \ge 2$. Note that 
even $m_1 \ge 3$ holds as no vector in $\{-1,0,1\}^2$ has odd column sum except for $\pm$ a standard basis vector. 
Again, $(e_1 \cdots e_{m_1} | A)$ is a prepared TU-matrix, hence, by the induction hypothesis, $m_1+n_1\le h(m_1)$. By \Cref{lem:odd}(1) applied to $M'$ (recall that $w=\1$), 
$$\begin{pmatrix}0 & v^\top \\ I_{m_2} & B\end{pmatrix}$$ is a prepared TU $(m_2+1) \times (m_2+n_2)$-matrix. Here, note that $\begin{psmallmatrix} v^\top\\B\end{psmallmatrix}$ cannot contain a standard basis vector $e_j$ with $j \in \{2, \ldots, m_2+1\}$ as a column as otherwise by the definition of a $2$-sum the matrix $M'$ would contain $e_{m_1+j-1}$ as a column. The induction hypothesis implies $m_2+n_2 \le h(m_2+1)$. Therefore, $n =n_1+n_2 \le h(m_1)-m_1 + h(m_2+1)-m_2 \le h(m_1+m_2)-(m_1+m_2)$ by \Cref{extralemma}(2).

\medskip

Let $M'$ be a $3$-sum. As for the case of a $2$-sum, we have $m_1 \ge 3$ and $m_1+n_1\le h(m_1)$. As we otherwise would be in the case of a $1$-sum, $C$ has a non-zero row $w^\top$. If all non-zero rows of $C$ would be of the form $\pm w^\top$, this would be the case of a $2$-sum. Hence, we may assume that there are at least two non-zero and non-centrally-symmetric rows of $C$, say, of the form $\pm v^\top_1$ and $\pm v^\top_2$. By \Cref{lem:odd}(2), 
$$\begin{pmatrix}0 & v^\top_1 \\ 0 & v^\top_2\\ I_{m_2} & B\end{pmatrix}$$ is a prepared $(m_2+2)\times (m_2+n_2)$ TU-matrix. Again, note that $$\begin{pmatrix} v^\top_1\\v^\top_2\\B\end{pmatrix}$$ cannot contain a standard basis vector $e_j$ with $j \in \{3, \ldots, m_2+2\}$ as a column as otherwise by the definition of a $3$-sum the matrix $M'$ would contain $e_{m_1+j-2}$ as a column. Therefore, the induction hypothesis implies $m_2+n_2 \le h(m_2+2)$. Hence, $n =n_1+n_2 \le h(m_1)-m_1 + h(m_2+2)-m_2 \le h(m_1+m_2)-(m_1+m_2)$ by \Cref{extralemma}(3); note that all three exceptional cases cannot occur as they satisfy $m = m_1+m_2 \le 6$.

\medskip

Let $M'$ be a $\Delta$-sum. In the same way as above, 
$$\begin{pmatrix}I_{m_1} & A \\ 0 & u^\top\end{pmatrix} \text{ and }
\begin{pmatrix}0 & v^\top\\I_{m_2} & B\end{pmatrix}$$ are prepared TU-matrices by \Cref{lem:odd}(3). Hence, the induction hypothesis implies $n_1 \le h(m_1+1)-m_1$ and $n_2 \le h(m_2+1)-m_2$. Therefore, \Cref{extralemma}(4) implies $n =n_1+n_2 \le h(m_1+1)-m_1 + h(m_2+1)-m_2 \le h(m_1+m_2)-(m_1+m_2)$ if $m_1\ge 2$ and $m_2 \ge 2$. 

Let us assume that $m_1 = 1$. If there would be an $i \in \{1, \ldots, n_1\}$ such that 
the $i$th coordinate of $u^\top$ would be $0$, then all coordinates except possibly the first one of the $i$th-column vector of $M'$ would be $0$, a contradiction to $M'$ having column sums $1$ and not containing any standard basis vector as a column. Hence, every coordinate of $u^\top$ is equal to $\pm 1$. Consider the $2\times n_1$ TU-matrix $\begin{psmallmatrix} A \\ u^\top \end{psmallmatrix}$. If there are $i \not=j$ such that $A_i = A_j \not=0$, then because of the TU-property also $(u^\top)_i = (u^\top)_j$ holds, hence, the $i$th and the $j$th column of $M'$ coincide, a contradiction.  Therefore, if there are $i \not=j$ such that $A_i \not=0 \not= A_j$, then $A_i = - A_j$, and by the same argument we also have $(u^\top)_i = -(u^\top)_j$, and thus, also the $i$th and the $j$th column of $M'$ are centrally-symmetric, a contradiction to $M'$ having column sums $1$. Also, we see in the same way that that there cannot exist $i \not=j$ such that $A_i = 0=A_j$. This shows that $A$ has only at most two entries, i.e., $n_1 \le 2$. Hence, the size $m_1+n_1$ of $A$ is at most $3$, a contradiction to the assumption of \Cref{4-sum}. This proves $m_1 \ge 2$. In the same way, one shows $m_2 \ge 2$. This finishes the proof of \Cref{theorem:tu}.
\hfill$\qed$

\medskip
\bibliographystyle{alpha}
\bibliography{tu-paper}

\end{document}